\newtheorem{thm}{Theorem}[section]
\newtheorem*{theorem*}{Theorem}
\newtheorem*{quest*}{Question}
\newtheorem{theorem}[thm]{Theorem}
\newtheorem{cor}[thm]{Corollary}
\newtheorem{prop}[thm]{Proposition}
\newtheorem{lemma}[thm]{Lemma}
\newtheorem*{conj*}{Conjecture}
\newtheorem{remark}[thm]{Remark}
\newtheorem{defn}[thm]{Definition}
\newcommand{\GL}{\operatorname{GL}_{2}}
\newcommand{\calB}{\mathcal{B}}
\newcommand{\bba}{{\mathbb{A}}}
\newcommand{\bbc}{{\mathbb{C}}}
\newcommand{\bbf}{{\mathbb{F}}}
\newcommand{\bbq}{{\mathbb{Q}}}
\newcommand{\bbz}{{\mathbb{Z}}}
\newcommand{\Gal}{\operatorname{Gal}}
\newcommand{\Fp}{\bbf_{p}}
\newcommand{\Fpx}{\left(\bbz/p\bbz\right)^{\times}}
\renewcommand{\ss}{\operatorname{ss}}
\newcommand{\Jss}{\operatorname{J}_{\ss}}
\newcommand{\alg}{{\operatorname{alg}}}
\newcommand{\Kalg}{K^{\alg}}
\newcommand{\Kst}{K\left(s,t\right)}
\newcommand{\wt}{\operatorname{wt}}
\newcommand{\tx}{\widetilde{x}}
\newcommand{\fkA}{\mathfrak{A}}
\newcommand{\fkQ}{\mathfrak{Q}}
\title[The automorphism group of the $p^{n}$-torsion points]{The automorphism group of the $p^{n}$-torsion points of an~elliptic curve over a field of characteristic $p \ge 5$}
\author{Bo-Hae Im}
\address{Department of Mathematical Sciences, KAIST, 291 Daehak-ro, Yuseong-gu, Daejeon, 34141, South Korea}
\email{bhim@kaist.ac.kr}
\thanks{Bo-Hae Im was supported by the National Research Foundation of Korea(NRF) grant funded by the Korea government(MSIT) (No.~2020R1A2B5B01001835). Hansol Kim was supported by NSTC grant funded by Taiwan government (MST) (No.~113-2811-M-001-068).}
\author{Hansol Kim}
\address{Institute of Mathematics, Academia Sinica, 6F, Astronomy-Mathematics Building,
No. 1, Sec. 4, Roosevelt Road, Taipei 10617, Taiwan}
\email{jawlang@gate.sinica.edu.tw}
\date{\today}
\subjclass[2020]{Primary 11G05, Secondary 14H52}
\keywords{elliptic curve, torsion subgroup}
\begin{document}

\maketitle

\begin{abstract}
	For a field $K$ of characteristic $p\ge5$ and the elliptic curve $E_{s,t}: y^{2} = x^{3} + sx + t$ defined over the function field $\Kst$ of two variables $s$ and $t$, we prove that for a positive integer $n$, the automorphism group  of the normal extension $\Kst\left(E_{s,t}\left[p^{n}\right]\right)/\Kst$ is isomorphic to  $\left(\bbz/p^{n}\bbz\right)^{\times}$, and its  inseparable degree is $p^{n}$.
\end{abstract}

\section{Introduction}\label{intro}

Elliptic curves play a central role in modern number theory and algebraic geometry, with applications ranging from cryptography to the study of Diophantine equations. An elliptic curve $E$ over a field 
$F$ is a smooth projective variety of genus $1$, equipped with a distinguished point as the identity for an abelian group law defined on its rational points. 

Over fields $F$ of characteristic 0, such as 
$\bbq$ or number fields, elliptic curves $E/F$ have been extensively studied due to their rich arithmetic properties, and significant attention has been given to the faithful mod-$N$ Galois representation 
$$\rho_{E/F,N}: \Gal\left(F\left(E\left[N\right]\right)/F\right) \hookrightarrow \GL\left(\bbz/N\bbz\right)$$ for a positive integer 
$N$.

Serre's open image theorem (\cite{Serre72}) proves that for each non-CM elliptic curve $E$ over $\bbq$, there exists a positive constant integer $M_{E}$, depending only on $E$, such that the image of $\rho_{E/\bbq,N}$ is $\GL\left(\bbz/N\bbz\right)$ for all $N$ with $\gcd\left(N,M_{E}\right) = 1$. Moreover, Serre conjectured the constants $M_{E}$ can be uniformly bounded. This conjecture is known as Serre's uniformity conjecture.
The possible representations of $\rho_{E/\bbq,\ell}$ for primes $\ell$ and elliptic curves defined over $\bbq$ have been  widely studied. For example, it is proven in \cite[Theorem~1]{Mazur} that $\rho_{E/\bbq, \ell}$ is irreducible for each prime $\ell > 163$ and any elliptic curve $E$ defined over $\bbq$. In \cite{GN}, the possible representations $\rho_{E/\bbq,\ell}$ for non-CM elliptic curves $E$ defined over $\bbq$ are classified for each prime $\ell$. Furthermore, in the same work, the possible representations $\rho_{E/\bbq,\ell}$ for CM elliptic curves $E$  over $\bbq$ with endomorphism ring $\mathcal{O}$ (over $\bbc$) are listed for each prime $\ell$. 

Conversely, for each prime $\ell$  and certain subgroups $H$ of  $\GL\left(\bbz/\ell\bbz\right)$, \cite{Z08}  provides equivalent conditions for the $j$-invariants of non-CM elliptic curves $E$ over $\bbq$ such that the image of $\rho_{E/\bbq,\ell}$ corresponds to $H$ (up to conjugate). 
In \cite{LR}, the Galois representations $\rho_{E/\bbq\left(j\right),\ell^{n}}$ is studied for each $j$, an CM elliptic curve $E$ over $\bbq\left(j\right)$, and a prime power $\ell^{n}$.

On the other hand, we consider the elliptic curve $E_{s,t}: y^{2} = x^{3} +sx +t$ defined over $F\left(s,t\right)$, where $s$ and $t$ are independent variables.  Then, it is known  (\cite[\S~7]{DS} or \cite[proof of Lemma~4.1]{IK22}) that 
\begin{align}\label{inc}
	\Gal\left(F\left(s,t\right)\left(E_{s,t}\left[N\right]\right)/F\left(s,t\right)\right) \supseteq \operatorname{SL}_{2}\left(\bbz/N\bbz\right)
\end{align} for each integer $N\ge 2$.

If the characteristic of $F$ is positive, say $\ell$, and if $\ell \nmid N$, then the inclusion~\eqref{inc} also holds by~\cite[6.8.~Corollary~1]{CF}: According to \cite[6.8.~Corollary~1]{CF}, the toroidal compactification $\overline{A_{1,N}}$  of the moduli space of the pairs $\left(E,\calB\right)$ of an elliptic curve and a basis $\calB$ of $E\left[N\right]$ is considered with the given  Weil paring defined over the ring $\bbz\left[\zeta_{N},1/N\right]$. Any prime $\ell$ not dividing~$N$ is a prime in $\bbz\left[\zeta_{N},1/N\right]$. The fiber of $\ell$ is the moduli space of the pairs $\left(E,\calB\right)$ of elliptic curve over a field of characteristic $p$ and a basis $\calB$ of $E\left[N\right]$ with given Weil paring. Since this fiber is irreducible, for the covering $X\left(N\right) \to X\left(1\right)$ of modular curves defined by $\left(E,\calB\right) \mapsto E$, its Galois group $G$ acts transitively on the set of bases of $ E\left[N\right]$ with a given Weil paring $z$, where $z$ is a primitive $N$th root of unity.
The transitivity of this action implies that $G$ 
 contains $\operatorname{SL}_{2}\left(\bbz/N\bbz\right)$.

\

While significant progress has been made in understanding mod-$N$ Galois representations in characteristic $0$, the case of positive characteristic remains less explored in the literature. Most studies center on elliptic curves over characteristic $0$ fields, which strongly motivates us to tackle the more challenging case of positive characteristic.

\

In this paper, we address a more challenging problem in the study of  mod-$N$ Galois representations, particularly when $N$ is a power of the characteristic of the base field. More explicitly, we inverstigate the automorphism group of the normal field extension $\Kst\left(E_{s,t}\left[p^{n}\right]\right)$ fixing $\Kst$, where the base field $K$ has positive characteristic $p \ge 5$, $n$ is a positive integer, and $E_{s,t}$ is the elliptic curve $y^{2} = x^{3} + sx + t$ defined over the function field $\Kst$ of two free variables $s$ and $t$. Our main result is stated as follows:

\begin{theorem}\label{thm:main}
	Let $K$ be a field of characteristic $p \ge 5$. For the elliptic curve $E_{s,t}: y^{2} = x^{3} + sx + t$ defined over $\Kst$, the automorphism group of $\Kst\left(E_{s,t}\left[p^{n}\right]\right)$ fixing $\Kst$ is isomorphic to $\left(\bbz/p^{n}\bbz\right)^{\times}$.
\end{theorem}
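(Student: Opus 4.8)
The plan is to pin down $G:=\Aut\left(\Kst\left(E_{s,t}\left[p^{n}\right]\right)/\Kst\right)$ through its action on $p^{n}$-torsion points — this gives an injection $G\hookrightarrow\left(\bbz/p^{n}\bbz\right)^{\times}$ — and then to recast the reverse inclusion, via the \'etale quotient of $E_{s,t}\left[p^{n}\right]$, as the full monodromy (equivalently, the irreducibility) of an Igusa-type tower over the $\left(s,t\right)$-plane.

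\textbf{Step 1 (the injection).} I would first record that the Hasse invariant of $E_{s,t}$ — the coefficient of $x^{p-1}$ in $\left(x^{3}+sx+t\right)^{\left(p-1\right)/2}$ — is a nonzero polynomial $A\left(s,t\right)\in\Fp\left[s,t\right]$, since not every elliptic curve in short Weierstrass form over a field of characteristic $p$ is supersingular. Hence $E_{s,t}$ is ordinary over $\Kst$, so $E_{s,t}\left[p^{n}\right]\!\left(\wbar{\Kst}\right)$ is cyclic of order $p^{n}$; fixing a generator $P_{n}$, we have $L_{n}:=\Kst\left(E_{s,t}\left[p^{n}\right]\right)=\Kst\left(P_{n}\right)$. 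Any $\Kst$-embedding $L_{n}\hookrightarrow\wbar{\Kst}$ sends $P_{n}$ to a point annihilated by $\left[p^{n}\right]$ but not $\left[p^{n-1}\right]$, hence to $\left[a\right]P_{n}$ for some $a\in\left(\bbz/p^{n}\bbz\right)^{\times}$, which again generates $L_{n}$; so $L_{n}/\Kst$ is normal. Since $\left[m\right]$ is defined over $\Fp$, the rule $\sigma\left(P_{n}\right)=\left[\chi\left(\sigma\right)\right]P_{n}$ defines a homomorphism $\chi\colon G\to\left(\bbz/p^{n}\bbz\right)^{\times}$, injective because a $\sigma$ fixing $P_{n}$ fixes all of $L_{n}=\Kst\left(P_{n}\right)$.

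\textbf{Step 2 (reduction to the \'etale quotient).} Next I would factor $\left[p^{n}\right]=\widehat{F}_{n}\circ F_{n}$, with $F_{n}\colon E_{s,t}\to E_{s,t}^{\left(p^{n}\right)}$ the $p^{n}$-power Frobenius and $\widehat{F}_{n}$ its dual. Ordinariness makes $\widehat{F}_{n}$ separable, so $\ker\widehat{F}_{n}$ is an \'etale cyclic group scheme of order $p^{n}$ over $\Kst$, with $\Aut\bigl(\ker\widehat{F}_{n}\bigr)=\left(\bbz/p^{n}\bbz\right)^{\times}$. The point $F_{n}\left(P_{n}\right)=\bigl(x\left(P_{n}\right)^{p^{n}},\,y\left(P_{n}\right)^{p^{n}}\bigr)$ generates $\ker\widehat{F}_{n}$, so $M_{n}:=\Kst\left(F_{n}\left(P_{n}\right)\right)$ is the splitting field of $\ker\widehat{F}_{n}$ — a separable (indeed Galois) extension of $\Kst$ — while $L_{n}/M_{n}$ is purely inseparable, being generated by $x\left(P_{n}\right),y\left(P_{n}\right)$ whose $p^{n}$-th powers lie in $M_{n}$. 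Thus $M_{n}$ is the separable closure of $\Kst$ in $L_{n}$, so $G=\Gal\left(M_{n}/\Kst\right)$ and $\chi$ becomes the injection $\Gal\left(M_{n}/\Kst\right)\hookrightarrow\Aut\bigl(\ker\widehat{F}_{n}\bigr)$. (This same picture should give the abstract's inseparable degree $\left[L_{n}:M_{n}\right]=p^{n}$.) Hence $\chi$ is onto iff $\left[M_{n}:\Kst\right]=p^{n-1}\left(p-1\right)$.

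\textbf{Step 3 (the main obstacle: full monodromy of the \'etale tower).} Everything is now reduced to showing that adjoining the \'etale $p^{n}$-torsion to $\Kst$ yields an extension of the maximal degree $p^{n-1}\left(p-1\right)$ — equivalently, that the tower of covers of the ordinary locus of $\A^{2}_{s,t}$ trivializing $E_{s,t}\left[p^{n}\right]^{\mathrm{et}}$ is geometrically connected, and since geometric connectedness survives extension of the constant field it is enough to treat $K=\wbar{\Fp}$. For $n=1$ I would argue directly: over the ordinary locus the cover is, up to a unit, the Kummer cover $z^{p-1}=A\left(s,t\right)$, so it suffices to check that $X^{p-1}-A\left(s,t\right)$ is irreducible over $\Kst$ by exhibiting monomials of $A$ showing it is not an $\ell$-th power for any prime $\ell\mid p-1$ (and, for the ramification-based argument below, that $A$ has a simple zero). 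For $n\ge2$ the steps $M_{n}/M_{n-1}$ are of degree $p$ and are \emph{not} of Kummer type; controlling them is the crux. I expect to invoke the irreducibility of the Igusa tower (Igusa's theorem), or to run the classical argument that the \'etale $p^{n}$-torsion cover is totally ramified, with ramification index $p^{n-1}\left(p-1\right)$, over the supersingular locus of $\A^{2}_{s,t}$ — restricting to a generic line and reasoning as for the Igusa curve forces both connectedness and the exact degree. (The cusp $j=\infty$ plays the complementary role: there $E_{s,t}$ degenerates to a Tate curve whose $p^{n}$-torsion field is $\Kst$-purely inseparable of degree $p^{n}$, which is where that degree is visibly attained.) Granting this, $\chi$ is surjective over every field $K$ of characteristic $p\ge5$, and with Step 1 we conclude $G\cong\left(\bbz/p^{n}\bbz\right)^{\times}$.
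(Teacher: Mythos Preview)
Your framework is sound and, at the level of strategy, parallel to the paper's: the injection (Step~1) is identical, and your Frobenius factorization $[p^{n}]=\widehat{F}_{n}\circ F_{n}$ (Step~2) is a clean way to isolate $M_{n}$ as the separable closure of $\Kst$ in $L_{n}$ --- the paper reaches the same separation more laboriously, by building the tower $\Kst\subset\Kst(x_{1})\subset\cdots\subset\Kst(x_{n})\subset\Kst(x_{n},y_{n})$ and tracking separable and inseparable degrees at each step via explicit division-polynomial identities (Lemmas~\ref{lem:poly1} and~\ref{lem:poly2}).

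The gap is Step~3, which is essentially the entire theorem. You correctly name the mechanism --- total ramification over the supersingular locus --- but ``invoking Igusa's theorem'' is not free: the classical statement lives over the $j$-line, and transporting it to the two-parameter $(s,t)$-family requires controlling the non-\'etale base change $(s,t)\mapsto j(s,t)$ and the quadratic-twist ambiguity (which is exactly what forces the paper's separate argument in Proposition~\ref{prop:modified_main} to pass from $\Kst(x_{n})$ to $\Kst(x_{n},y_{n})$). The alternative you sketch --- running the ramification argument directly --- \emph{is} what the paper does, and it is the paper's main technical content: one first shows that the Hasse invariant $a_{(p-1)/2}(s,t)$ is squarefree by identifying its irreducible factors with the supersingular $j$-invariants (Proposition~\ref{prop:leading_coeff_and_ssj}, Corollary~\ref{cor:leading_coeff_is_sep}) --- this is what underlies both your ``$A$ is not an $\ell$-th power'' and your ``$A$ has a simple zero'' --- and then proves, by an Eisenstein/valuation computation at a prime of $K[s,t]$ dividing $a_{(p-1)/2}$, that each layer of the tower is totally ramified there (Proposition~\ref{prop:eta_irr}). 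Your cusp/Tate-curve remark for the inseparable degree is suggestive but likewise not a proof; the paper obtains $[L_{n}:M_{n}]=p^{n}$ as a byproduct of the same layer-by-layer degree count.
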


Theorem~\ref{thm:main} should be understood in the context of the group structures of $p^{n}$-torsion parts of elliptic curves over a field of a positive characteristic.

Let $E$ be an elliptic curve defined over a field of characteristic $p\ge 2$. It is known that the $p$-torsion subgroup $E\left[p\right]$ is trivial or isomorphic to $\bbz/p\bbz$. We say that $E$ is supersingular if $E\left[p\right]$ is trivial, and ordinary if $E\left[p\right]$ is not trivial. Moreover, for an ordinary $E$ and a positive integer $n$, we have $E\left[p^{n}\right] \cong \bbz/p^{n}\bbz$ (\cite[Ch.III, Corollary~6.4~(c)]{Silverman}). For  ordinary $E$, we denote the automorphism group of the normal extension $K\left(E\left[p^{n}\right]\right)$ over $K$ (i.e., fixing $K$) by $\fkA_{E/K}\left(n\right)$. Since an element of $\fkA_{E/K}\left(n\right)$ is determined by the image of a generator of $E\left[p^{n}\right]  \cong \bbz/p^{n}\bbz$, it follows that $\fkA_{E/K}\left(n\right) $ is isomorphic to a subgroup of $\left(\bbz/p^{n}\bbz\right)^{\times}$. More precisely, for a generator $P_{n}$ of $E\left[p^{n}\right]$, the injective group homomorphism \begin{equation}\label{eqn:rho}
    \rho: \fkA_{E/K}\left(n\right) \hookrightarrow \left(\bbz/p^{n}\bbz\right)^{\times}
\end{equation} satisfies $P_{n}^{\sigma} = \left[\rho\left(\sigma\right)\right]P_{n}$ for all $\sigma \in \fkA_{E/K}\left(n\right)$.

We can derive the following corollaries for elliptic curves defined over fields with positive characteristic:

\begin{cor}\label{cor:main1}
	Let $K$ be a Hilbertian field of characteristic $p\geq 5$. Then, for each integer $n\ge 1$, there exist infinitely many elliptic curves $E/K$  over $K$ with distinct $j$-invariants such that $\fkA_{E/K}\left(n\right) \cong \left(\bbz/p^{n}\bbz\right)^{\times}$.
\end{cor}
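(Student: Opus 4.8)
The plan is to derive Corollary~\ref{cor:main1} from Theorem~\ref{thm:main} by a Hilbert-irreducibility specialization of the two parameters $s,t$; the only real work is to check that the formation of the $p^{n}$-torsion field commutes with such a specialization on a suitable Hilbert subset of $K^{2}$.

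First I would reduce to a separable problem. Put $F=\Kst$, $L=F(E_{s,t}[p^{n}])$, and let $L^{\sep}$ be the separable closure of $F$ in $L$. Since $L/F$ is normal, $L^{\sep}/F$ is Galois, $L/L^{\sep}$ is purely inseparable, and restriction induces an isomorphism $\Aut(L/F)\xrightarrow{\sim}\Gal(L^{\sep}/F)$; so Theorem~\ref{thm:main} gives $\Gal(L^{\sep}/F)\cong(\bbz/p^{n}\bbz)^{\times}$, a group of order $\varphi(p^{n})$. The same bookkeeping works after specialization: for every ordinary $E/K$ we have $\fkA_{E/K}(n)=\Aut(K(E[p^{n}])/K)\cong\Gal(K(E[p^{n}])^{\sep}/K)$, which injects into $(\bbz/p^{n}\bbz)^{\times}$ by~\eqref{eqn:rho}. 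So it suffices to produce, avoiding any prescribed finite set of $j$-invariants, an ordinary $E/K$ with $[K(E[p^{n}])^{\sep}:K]\ge\varphi(p^{n})$, since equality is then automatic.

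Next comes the specialization. As $j(E_{s,t})$ is transcendental over $\bbf_{p}$ it is not a supersingular value, so $E_{s,t}$ is ordinary over $\overline F$; hence $E_{s,t}[p^{n}](\overline F)\cong\bbz/p^{n}\bbz$, and fixing a generator $P_{n}$ we get $L=F(x(P_{n}),y(P_{n}))$ because every $p^{n}$-torsion point has coordinates rational over $F$ in $x(P_{n}),y(P_{n})$ via the division polynomials. The Hasse invariant $h(s,t)\in\bbf_{p}[s,t]$ is not identically zero, so $U=\{\,h\ne0,\ 4s^{3}+27t^{2}\ne0\,\}$ is a nonempty Zariski-open subset of $\A^{2}_{K}$, namely the locus of good ordinary reduction. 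Since $K$ is Hilbertian, for any finite $S\subset K$ there is a point $(s_{0},t_{0})\in U(K)$ — indeed a Zariski-dense set of them — at which the Galois extension $L^{\sep}/F$ specializes to a field $\ell$ with $\Gal(\ell/K)\cong(\bbz/p^{n}\bbz)^{\times}$ and $[\ell:K]=\varphi(p^{n})$, and with $j(E_{s_{0},t_{0}})\notin S$; here $\ell$ is the residue field of the (finite, since $K[s,t]$ is Nagata) integral closure of $\cO_{\A^{2}_{K},(s_{0},t_{0})}$ in $L^{\sep}$ above the maximal ideal. Iterating while enlarging $S$ yields infinitely many such $(s_{0},t_{0})$ with distinct $j$-invariants.

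It remains to identify $\ell$ with a subfield of $K(E_{s_{0},t_{0}}[p^{n}])$. Because $(s_{0},t_{0})\in U$, the Weierstrass model $E_{s,t}$ has good ordinary reduction there, so its $p^{n}$-torsion sections are integral over $\cO_{\A^{2}_{K},(s_{0},t_{0})}$ and specialize to $p^{n}$-torsion points of $E_{s_{0},t_{0}}$; moreover the \'etale part of $E_{s,t}[p^{n}]$ is finite \'etale (hence locally constant) over the local ring while the connected part contributes no nontrivial geometric point, so the specialization map $E_{s,t}[p^{n}](\overline F)\to E_{s_{0},t_{0}}[p^{n}](\overline K)$ is an isomorphism of cyclic groups of order $p^{n}$ — this is the step where positive characteristic actually helps, since there is no $\mu_{p^{n}}$-part to be lost. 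Hence $P_{n}$ reduces to a generator $\overline P_{n}$ of $E_{s_{0},t_{0}}[p^{n}]$, and passing to residue fields carries $L=F(x(P_{n}),y(P_{n}))$ onto $K(x(\overline P_{n}),y(\overline P_{n}))=K(E_{s_{0},t_{0}}[p^{n}])$, so in particular $\ell$, the image of $L^{\sep}$, lies in $K(E_{s_{0},t_{0}}[p^{n}])^{\sep}$. Then $\varphi(p^{n})=[\ell:K]\le|\fkA_{E_{s_{0},t_{0}}/K}(n)|\le\varphi(p^{n})$, forcing $\fkA_{E_{s_{0},t_{0}}/K}(n)\cong(\bbz/p^{n}\bbz)^{\times}$; letting $S$ grow produces infinitely many elliptic curves over $K$ with distinct $j$-invariants and the required automorphism group. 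I expect this torsion-specialization compatibility in residue characteristic $p$ to be the main obstacle — note that, pleasantly, the genuinely inseparable part of $L/F$ never needs to be specialized.
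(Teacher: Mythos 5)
Your overall strategy is the same as the paper's (prove the generic statement, then specialize $(s,t)$ via Hilbert irreducibility), and several ingredients are sound: the reduction $\fkA\cong\Gal$ of the separable closure, the injection \eqref{eqn:rho} bounding $\left|\fkA_{E/K}(n)\right|$ by $\varphi(p^{n})$, the ordinariness/Hasse-invariant locus, and the fact that for good ordinary reduction the specialization map on physical $p^{n}$-torsion points is an isomorphism of cyclic groups of order $p^{n}$ (the connected part has no nontrivial geometric points). But the crucial bridging step is asserted rather than proved, and as stated it does not follow: you claim that ``passing to residue fields carries $L=F\left(x\left(P_{n}\right),y\left(P_{n}\right)\right)$ onto $K\left(x\left(\overline{P}_{n}\right),y\left(\overline{P}_{n}\right)\right)$,'' hence that $\ell$, the residue field of $L^{\sep}$, lies in $K\left(E_{s_{0},t_{0}}\left[p^{n}\right]\right)^{\sep}$. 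The residue field of (the integral closure of the local ring in) $L^{\sep}$ at the chosen prime need not be generated by the reductions of your chosen generators: an element of $L^{\sep}$ is a rational combination of $x_{n},y_{n}$ whose coefficients in $K(s,t)$ may have poles at $(s_{0},t_{0})$ even when the element itself is integral there, so its reduction need not lie in $K\left(\overline{x}_{n},\overline{y}_{n}\right)$. (Toy example: $F=K(t)$, $\alpha=t\sqrt{a}$ with $a$ a nonsquare; the residue field at $t=0$ is $K\left(\sqrt{a}\right)$, strictly larger than $K\left(\overline{\alpha}\right)=K$.) Knowing that torsion points reduce to torsion points gives the containment $K\left(\overline{x}_{n},\overline{y}_{n}\right)\subseteq$ residue field of $L$, which is the wrong direction; without the reverse containment your inequality $\varphi(p^{n})=[\ell:K]\le\left|\fkA_{E_{s_{0},t_{0}}/K}(n)\right|$ is unsupported.

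The gap is repairable, but only by adding exactly the mechanism the paper builds in from the start: either pick a primitive element $\gamma$ of $L^{\sep}/F$ that is a $K[s,t]$-polynomial in $x_{n},y_{n}$, restrict to the Hilbert set where its specialized minimal polynomial stays irreducible, and use $[\ell:K]\le\left[L^{\sep}:F\right]$ to force $\ell=K\left(\overline{\gamma}\right)\subseteq K\left(E_{s_{0},t_{0}}\left[p^{n}\right]\right)$; or, as the paper does, apply Hilbert irreducibility to the division polynomial $\overline{\psi_{p^{n},u,u}}$ itself, so that the specialized splitting field is by construction $K\left(x\left(E_{u_{0},u_{0}}\left[p^{n}\right]\right)\right)$ and no residue-field identification is needed. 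The paper's detour is not idle: it first reduces to the one-parameter family $E_{u,u}$ (which also yields the distinct $j$-invariants for free), gets only the $x$-coordinate field of index-two group $\left(\left(\bbz/p^{n}\bbz\right)^{\times}\right)^{2}$ from the specialization, and then recovers the missing factor of $2$ coming from the $y$-coordinate by a quadratic-twist argument (as in Proposition~\ref{prop:modified_main}); your proposal implicitly claims to capture the $y$-coordinate part in one stroke through the unproven ``onto'' statement, which is precisely where the work lies.
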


We observe that if $E/K$ is supersingular, then clearly $E(K)[p]$ is trivial. However, as noted in Remark~\ref{jss} and Lemma~\ref{lem:ssj} (Section~\ref{sec:back_groud}), there are only finitely many $j$-invariants in $K^{\text{alg}}$, each corresponding to a supersingular elliptic curve over $K$, making the following result particularly noteworthy:
\begin{cor}\label{cor:main2}
	Let $K$ be a Hilbertian field of characteristic $p\geq 5$. Then, there exist infinitely many ordinary elliptic curves $E/K$  over $K$ with distinct $j$-invariants such that $E\left(K\right)\left[p\right]$, and so $E\left(K\right)\left[p^\infty\right]$ is trivial.
\end{cor}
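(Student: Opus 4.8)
The plan is to derive Corollary~\ref{cor:main2} directly from Corollary~\ref{cor:main1} with $n=1$, combined with the elementary fact that the $p$-torsion subgroup of an ordinary elliptic curve is cyclic of prime order.

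First, I would apply Corollary~\ref{cor:main1} with $n=1$ to obtain infinitely many elliptic curves $E/K$, with pairwise distinct $j$-invariants, such that $\fkA_{E/K}\left(1\right) \cong \left(\bbz/p\bbz\right)^{\times}$. These curves are automatically ordinary, since $\fkA_{E/K}\left(1\right)$ is only defined when $E\left[p\right] \cong \bbz/p\bbz$; equivalently, by Remark~\ref{jss} and Lemma~\ref{lem:ssj} only finitely many $j$-invariants in $\Kalg$ are supersingular, so discarding those finitely many curves leaves infinitely many ordinary ones.

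Second, since $p \ge 5$ we have $\left|\left(\bbz/p\bbz\right)^{\times}\right| = p-1 \ge 4 > 1$, so $\fkA_{E/K}\left(1\right)$ is nontrivial; hence $K\left(E\left[p\right]\right) \neq K$, i.e. $E\left[p\right] \not\subseteq E\left(K\right)$. Now use that $E$ is ordinary: $E\left[p\right] \cong \bbz/p\bbz$ has prime order $p$, so its subgroup $E\left(K\right)\left[p\right] = E\left(K\right) \cap E\left[p\right]$ is either $0$ or all of $E\left[p\right]$. The latter was just excluded, so $E\left(K\right)\left[p\right] = 0$. Finally, $E\left[p^{\infty}\right]$ is a $p$-group, so any nonzero element $Q$ has order $p^{k}$ for some $k \ge 1$, and then $\left[p^{k-1}\right]Q$ is a nonzero element of $E\left[p\right]$; if $Q \in E\left(K\right)$ this would produce a nonzero element of $E\left(K\right)\left[p\right]$, a contradiction. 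Hence $E\left(K\right)\left[p^{\infty}\right] = 0$ as well.

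There is no serious obstacle in this argument: essentially all the content sits in Theorem~\ref{thm:main} and Corollary~\ref{cor:main1}. The only points needing (minor) care are checking that the curves produced are ordinary and using the prime-order structure of $E\left[p\right]$ to promote the statement $E\left[p\right]\not\subseteq E\left(K\right)$ to $E\left(K\right)\left[p\right]=0$; the passage from $p$-torsion to all $p$-power torsion is then purely formal.
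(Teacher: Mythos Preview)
Your proposal is correct and follows essentially the same approach as the paper: apply Corollary~\ref{cor:main1} with $n=1$, use that $\fkA_{E/K}(1)\cong(\bbz/p\bbz)^{\times}$ is nontrivial together with the prime order of $E[p]$ to force $E(K)[p]=0$, and invoke the finiteness of supersingular $j$-invariants to ensure ordinariness. The paper phrases the key implication as the contrapositive (if $E(K)[p]\neq 0$ then $\fkA_{E/K}(1)$ is trivial), but the content is identical; your explicit treatment of the passage from $E(K)[p]=0$ to $E(K)[p^{\infty}]=0$ is a welcome addition that the paper leaves implicit.
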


\begin{remark}
	The concept of a Hilbertian field, Hilbert's irreducibility theorem, and related topics are discussed in \cite[\S~9]{Serre97} and \cite[\S~3]{Serre08}. In essence,  if $K$ is Hilbertian, then Hilbert's irreducibility theorem guarantees  infinitely many  points $\left(s,t\right) = \left(a,b\right)\in K\times K$ such that $\fkA_{E_{a,b}/K}\left(n\right) = \fkA_{E_{s,t}/\Kst}\left(n\right) \cong \left(\bbz/p^{n}\bbz\right)^{\times}$ by specialization. A more detailed explanation will be given in the proof of Corollary~\ref{cor:main1} at the end of Section~\ref{sec:proof}. An example of a Hilbertian field with positive characteristic is a global function field (\cite[Corollary~12.8]{FJ86}).
\end{remark}

Key tools used in the  proof of Theorem~\ref{thm:main} include the  supersingular $j$-invariants which are studied in Section~\ref{subsec:ssj}, and the division polynomials of an elliptic curve which are reviewed  in Section~\ref{subsec:division_poly}. After these reviews, we establish several lemmas that facilitate the proof of Theorem~\ref{thm:main}. Finally, the full proof of Theorem~\ref{thm:main} is presented in Section~\ref{sec:proof}.

\

\noindent {\bf Acknowledgement.} We express our gratitude to Professor Chia-Fu Yu for directing us to reference~\cite{CF}, which establishes the inclusion~\eqref{inc} in the setting of fields with positive characteristic.

\section{Supersingular $j$-invariants and division polynomials }\label{sec:back_groud}
Throughout this section,  we assume that $p\ge3$, unless specified otherwise.

\subsection{Supersingular $j$-invariants}\label{subsec:ssj}
First, we define supersingular $j$-invariants and study the properties of polynomial whose zeros are supersingular $j$-invariants that are neither $0$ nor $1728$. A $j_{\ss} \in \Kalg$ is called a supersingular $j$-invariant if there exists a supersingular elliptic curve with $j$-invariant $j_{\ss}$. We denote 
 $$\Jss:=\{\text{supersingular $j$-invarinats in } \Kalg\}.$$

 We recall the following equivalent condition for the supersingularity of an eliiptic curve in  Legendre form;
\begin{lemma}[{\cite[Ch.V, Theorem~4.1~(b)]{Silverman}}]\label{fact:Legendre}
	For a $\lambda \in \Kalg \setminus \left\{ 0,1\right\}$, the elliptic curve $E_{\lambda}: y^{2}= x\left(x-1\right)\left(x-\lambda\right)$ is supersingular if and only if $\lambda$ satisfies $$
		\sum_{k=0}^{\left(p-1\right)/2}\binom{\left(p-1\right)/2}{k}^{2} \lambda^{k} = 0.
	$$
\end{lemma}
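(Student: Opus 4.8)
The plan is to reduce the statement to the classical Hasse-invariant criterion for supersingularity and then carry out an explicit binomial computation. Recall that for an elliptic curve $E : y^{2} = f(x)$ over a field of characteristic $p \ge 3$ with $f$ a monic cubic (so that $(p-1)/2$ is an integer and the Legendre model makes sense), $E$ is supersingular if and only if the coefficient of $x^{p-1}$ in $f(x)^{(p-1)/2}$ vanishes; this is the content of part~(a) of the cited theorem, which in turn follows from the description of the action of the $p$-power Frobenius on $H^{1}(E,\mathcal{O}_{E})$, equivalently the action of the Cartier operator on the invariant differential $\omega = dx/y$. I would take this criterion as the starting input.

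Next I would specialize to $f(x) = x(x-1)(x-\lambda)$ and set $m = (p-1)/2$, so that $f(x)^{m} = x^{m}\,\bigl((x-1)(x-\lambda)\bigr)^{m}$. Extracting the coefficient of $x^{p-1} = x^{2m}$ from $f(x)^{m}$ is then the same as extracting the coefficient of $x^{m}$ from $\bigl((x-1)(x-\lambda)\bigr)^{m}$. Expanding $(x-1)^{m} = \sum_{i} \binom{m}{i}(-1)^{m-i}x^{i}$ and $(x-\lambda)^{m} = \sum_{j} \binom{m}{j}(-1)^{m-j}\lambda^{m-j}x^{j}$, collecting the terms with $i+j = m$, and using $\binom{m}{m-i} = \binom{m}{i}$ together with $(-1)^{m-i}(-1)^{i} = (-1)^{m}$, I get that this coefficient equals $(-1)^{m}\sum_{k=0}^{m}\binom{m}{k}^{2}\lambda^{k}$. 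Since $(-1)^{m}$ is a unit in $\Kalg$, the Hasse invariant vanishes if and only if $\sum_{k=0}^{(p-1)/2}\binom{(p-1)/2}{k}^{2}\lambda^{k} = 0$, which is exactly the claimed equivalence.

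The only genuine obstacle is the Hasse-invariant criterion itself, namely the identification of supersingularity with the vanishing of the coefficient of $x^{p-1}$ in $f(x)^{(p-1)/2}$. Proving this from scratch requires computing the matrix of Frobenius on $H^{1}(E,\mathcal{O}_{E})$ via \v{C}ech cohomology on the standard affine cover, or equivalently analyzing the Cartier operator on $\omega$: one writes $\omega = \tfrac{dx}{y} = \tfrac{y^{p-1}dx}{y^{p}} = \tfrac{f(x)^{(p-1)/2}\,dx}{y^{p}}$ and uses $C(g^{p}\eta) = g\,C(\eta)$ along with $C(x^{i}\,dx) = x^{(i+1)/p - 1}\,dx$ when $p \mid i+1$ and $0$ otherwise; since $\deg f(x)^{(p-1)/2} = 3(p-1)/2 < 2p-1$, only the $x^{p-1}$ term survives, so $C(\omega)$ is a scalar multiple of $\omega$ whose scalar is (a $p$-th root of) that coefficient, and $E$ is supersingular precisely when the scalar is $0$. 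Once this input is in hand, everything else is the routine binomial bookkeeping sketched above.
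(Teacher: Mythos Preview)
Your proof is correct and is exactly the standard argument from Silverman that the paper cites; the paper itself gives no proof of this lemma, merely quoting it as \cite[Ch.V, Theorem~4.1~(b)]{Silverman}. Your reduction to the Hasse-invariant criterion (vanishing of the coefficient of $x^{p-1}$ in $f(x)^{(p-1)/2}$) followed by the binomial expansion of $x^{m}\bigl((x-1)(x-\lambda)\bigr)^{m}$ is precisely Silverman's approach, so there is nothing to compare.
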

\begin{remark}\label{jss} Noting that the $j$-invariant of $E_{\lambda}$ is a rational function of $\lambda$, Lemma~\ref{fact:Legendre} implies that $\Jss$ is finite, $\Jss \subseteq \Fp^{\alg}$, and the absolute Galois group of $\Fp$ acts on $\Jss$ with the trivial action on $\left\{0,1728\right\}$. Moreover, it is known that $\Jss \subseteq \bbf_{p^{2}}$. See \cite[Ch.V, Theorem~3.1~(a)]{Silverman}.
\end{remark}

By Remark~\ref{jss},  the polynomial defined by \begin{equation}\label{fss}
	f_{\ss}\left(j\right) := {\displaystyle \prod_{\substack{j_{\ss}\in \Jss \\ j_{\ss} \ne 0,1728}}} \left(j-j_{\ss}\right)
\end{equation}  over $\Fp^{\alg}$ is separable, and in fact, defined over $\Fp$. Moreover, it is known that $\deg f_{\ss} = \left\lfloor \frac{p-1}{12} \right\rfloor$ by the following Lemma~\ref{lem:ssj}:

\begin{lemma}[{\cite[Ch.V, Theorem~4.1~(c) and its proof]{Silverman}}]\label{lem:ssj} For a prime $p\ge 5$, the following  holds:
	\begin{enumerate}[\normalfont (a)]
		\item $0 \in \Jss$ if and only if $p \equiv 2 \pmod{3}$.
		\item $1728 \in \Jss$ if and only if $p \equiv 3 \pmod{4}$.
		\item $\# \left(\Jss \setminus \left\{0,1728\right\}\right) = \left\lfloor \frac{p-1}{12} \right\rfloor$.
	\end{enumerate}
\end{lemma}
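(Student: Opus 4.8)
The plan is to handle the three parts separately: (a) and (b) through the distinguished curves $y^{2}=x^{3}+1$ and $y^{2}=x^{3}+x$, and (c) through a count on the Legendre $\lambda$-line fed by Lemma~\ref{fact:Legendre}. Throughout set $m:=(p-1)/2$ and let $H(\lambda):=\sum_{k=0}^{m}\binom{m}{k}^{2}\lambda^{k}$ be the polynomial of Lemma~\ref{fact:Legendre}; it is monic of degree $m$, and one records at the outset that $H(0)=1$ and $H(1)=\sum_{k}\binom{m}{k}^{2}=\binom{2m}{m}=\binom{p-1}{m}\equiv(-1)^{m}\not\equiv 0\pmod{p}$, so neither $0$ nor $1$ is a root of $H$. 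For (b): the curve $E_{\lambda}$ with $\lambda=-1$ has $j$-invariant $1728$, so $1728\in\Jss$ if and only if $H(-1)=0$; the elementary evaluation $\sum_{k=0}^{m}(-1)^{k}\binom{m}{k}^{2}=0$ for odd $m$ and $=(-1)^{m/2}\binom{m}{m/2}$ for even $m$ (proved by pairing $k\leftrightarrow m-k$), together with $\binom{m}{m/2}\not\equiv 0\pmod{p}$ since $m<p$, gives $H(-1)=0\iff m$ odd $\iff p\equiv 3\pmod{4}$. For (a): the curve $y^{2}=x^{3}+1$ has $j$-invariant $0$; if $p\equiv 2\pmod{3}$ then $x\mapsto x^{3}$ is a bijection of $\Fp$, so a quadratic-character sum gives $p+1$ points over $\Fp$ and hence supersingularity, while if $p\equiv 1\pmod{3}$ the curve has complex multiplication by $\bbz[\zeta_{3}]$, an order in $\bbq(\sqrt{-3})$ in which $p$ splits, so it is ordinary by Deuring's reduction criterion. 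Thus $0\in\Jss\iff p\equiv 2\pmod{3}$.

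For (c), Lemma~\ref{fact:Legendre} identifies the set of $\lambda\in\Kalg\setminus\{0,1\}$ with $E_{\lambda}$ supersingular with the zero set of $H$. The first key step is to show that $H$ is separable, i.e.\ has $m$ distinct roots. This is part of the cited Theorem~V.4.1 of \cite{Silverman}, but can also be obtained directly: $H$ equals the terminating hypergeometric series ${}_{2}F_{1}(-m,-m;1;\lambda)$, hence satisfies the hypergeometric differential equation, which over $\Fp$ becomes $\lambda(1-\lambda)H''+(1-2\lambda)H'-\tfrac{1}{4}H=0$; a repeated root $\lambda_{0}$ (necessarily $\ne 0,1$ since $H(0),H(1)\ne 0$) would satisfy $H(\lambda_{0})=H'(\lambda_{0})=0$, hence $H''(\lambda_{0})=0$, and differentiating the equation repeatedly would force $H^{(k)}(\lambda_{0})=0$ for all $k\le m$, so $H=0$ by Taylor expansion around $\lambda_{0}$ (valid as $k!\ne 0$ in $\Fp$ for $k\le m<p$) — a contradiction.

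The second step is to partition the $m$ roots of $H$ under the $S_{3}$-action on the $\lambda$-line generated by $\lambda\mapsto 1-\lambda$ and $\lambda\mapsto 1/\lambda$. Since $E_{\lambda}\cong E_{\lambda'}$ exactly when $\lambda,\lambda'$ lie in one orbit, and since supersingularity and $j(E_{\lambda})$ depend only on the orbit while $j$ separates distinct orbits, the zero set of $H$ is a union of $S_{3}$-orbits, each of size $6$ except when $\lambda$ is fixed by a nontrivial element. For $p\ge 5$ the only non-generic orbits are $\{-1,2,\tfrac{1}{2}\}$ (size $3$, $j=1728$) and the pair of roots of $\lambda^{2}-\lambda+1$ (size $2$, $j=0$), and these are disjoint; writing $\varepsilon_{0},\varepsilon_{1728}\in\{0,1\}$ for the indicators of $0\in\Jss$ and $1728\in\Jss$, one obtains
\[
\frac{p-1}{2}=\deg H=6\,\#\bigl(\Jss\setminus\{0,1728\}\bigr)+3\varepsilon_{1728}+2\varepsilon_{0}.
\]
Inserting parts (a) and (b) and running through the four residue classes $p\equiv 1,5,7,11\pmod{12}$ then yields $\#(\Jss\setminus\{0,1728\})=\lfloor(p-1)/12\rfloor$ in each case. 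I expect the main obstacle to be the separability of $H$: once that is in hand, the orbit count and the arithmetic modulo $12$ are routine. A secondary point — and the reason the hypothesis $p\ge 5$ is needed — is checking that the $S_{3}$-orbit structure on the $\lambda$-line has exactly the two exceptional orbits listed above, which genuinely fails in characteristics $2$ and $3$.
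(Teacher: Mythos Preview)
The paper does not supply its own proof of this lemma; it simply cites \cite[Ch.~V, Theorem~4.1(c) and its proof]{Silverman}. Your argument is correct and, for part~(c), is exactly the classical proof given there: separability of $H$ via the hypergeometric differential equation, followed by the $S_{3}$-orbit count on the $\lambda$-line and the case split modulo~$12$. For part~(b) your evaluation of $H(-1)$ is likewise the standard route. The only place you diverge from Silverman is part~(a): invoking Deuring's reduction criterion for the CM curve $y^{2}=x^{3}+1$ is valid but heavier than necessary. The more elementary argument (and the one implicit in the cited reference) is to read off the Hasse invariant directly: the coefficient of $x^{p-1}$ in $(x^{3}+1)^{(p-1)/2}$ is $\binom{(p-1)/2}{(p-1)/3}$ when $3\mid p-1$ (nonzero mod~$p$ since the entries are $<p$) and is $0$ when $3\nmid p-1$, giving ordinariness exactly for $p\equiv 1\pmod 3$ without appealing to CM theory.
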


What we have discussed so far in  this section can be summarized as follows:
\begin{lemma}\label{lem:fss}
	The polynomial $f_{\ss}$ is defined over $\Fp$, is separable, and has degree $\left\lfloor \frac{p-1}{12} \right\rfloor$.
\end{lemma}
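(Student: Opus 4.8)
The statement in question, Lemma~\ref{lem:fss}, is an immediate synthesis of the three results assembled in this subsection, so the proof proposal is essentially one of bookkeeping rather than new ideas.

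\textbf{Overview of the plan.} The assertion has three parts: (i) $f_{\ss}$ is defined over $\Fp$; (ii) $f_{\ss}$ is separable; (iii) $\deg f_{\ss} = \left\lfloor \frac{p-1}{12} \right\rfloor$. Each of these has already been argued in the discussion preceding the statement, so the proof consists of citing the appropriate line. For (i) and (ii), I would appeal to Remark~\ref{jss}: since $\Jss$ is finite and contained in $\bbf_{p^{2}}$, the set $\Jss \setminus \{0,1728\}$ consists of finitely many distinct elements of $\Fp^{\alg}$, whence $f_{\ss}$ — being a product of distinct linear factors $(j - j_{\ss})$ — is separable. Since the absolute Galois group $\Gal(\Fp^{\alg}/\Fp)$ permutes $\Jss$ and fixes $\{0,1728\}$ pointwise (again Remark~\ref{jss}), it permutes the roots of $f_{\ss}$, so $f_{\ss}$ is Galois-stable and therefore has coefficients in $\Fp$. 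For (iii), I would invoke Lemma~\ref{lem:ssj}(c), which states exactly that $\#(\Jss \setminus \{0,1728\}) = \left\lfloor \frac{p-1}{12} \right\rfloor$; since $f_{\ss}$ is a product over precisely this set with each factor contributing degree one, the degree equals the cardinality.

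\textbf{Order of steps.} I would first record that $\Jss \setminus \{0, 1728\}$ is a finite subset of $\Fp^{\alg}$ with pairwise distinct elements (Remark~\ref{jss}), giving separability directly. Next I would use the Galois action to descend the field of definition to $\Fp$. Finally I would read off the degree from Lemma~\ref{lem:ssj}(c). No reordering is really forced, but separability and the field of definition are best handled together since both follow from Remark~\ref{jss}, and the degree count is logically independent.

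\textbf{Main obstacle.} Frankly, there is no real obstacle: every ingredient is already in hand. The only point requiring a word of care is the descent of the field of definition — one must note that a monic polynomial over $\Fp^{\alg}$ whose multiset of roots is stable under $\Gal(\Fp^{\alg}/\Fp)$ automatically has coefficients fixed by that group, hence lies in $\Fp[j]$; this is standard (the coefficients are, up to sign, the elementary symmetric functions of the roots, so they are Galois-invariant). If one wanted to be maximally careful, one could also remark that separability of $f_{\ss}$ over $\Fp$ follows from separability over $\Fp^{\alg}$ since separability is insensitive to base field extension, but this is automatic. Thus the proof is short and amounts to stitching together Remark~\ref{jss} and Lemma~\ref{lem:ssj}.
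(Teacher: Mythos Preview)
Your proposal is correct and matches the paper's approach exactly: the paper presents Lemma~\ref{lem:fss} explicitly as a summary of the preceding discussion (Remark~\ref{jss} and Lemma~\ref{lem:ssj}), with no separate proof, and your write-up simply makes the citations explicit.
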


\subsection{Division polynomials and the formula for the $x$-coordinates under the multiplication-by-$m$ map}\label{subsec:division_poly}
Now, we consider the division polynomials 
associated with the elliptic curves 
$$\mathcal{E}_{s,t}: y^{2} = x^{3} + sx + t \text{ defined over } \bbq\left(s,t\right),  \text{ and  } $$
$$E_{s,t}: y^{2} = x^{3} + sx + t \text{ defined over }\Kst.$$

Since $\mathcal{E}_{s,t}: y^{2} = x^{3} + sx + t$ is  defined over  $\bbz\left[s,t\right]\subset \bbq\left(s,t\right)$, we can consider its mod-$p$ reduction, which is defined over $\Fp\left[s,t\right]$. Given that the characteristic of $K$ is $p$ and a fixed embedding of $\Fp$ to $K$, we obtain natural morphisms from $\mathcal{E}_{s,t}$ to its mod-$p$ reduction $E_{s,t}$ as well as  from $\mathcal{E}_{s,t}\left[p\right]$ to $E_{s,t}\left[p\right]$. These morphisms, induced by the mod-$p$ reduction, are denoted by \begin{equation}\label{red-map}
	\operatorname{red}_{p}: \mathcal{E}_{s,t} \to E_{s,t}, \text{ and } \operatorname{red}_{p}: \mathcal{E}_{s,t}\left[p\right] \to E_{s,t}\left[p\right].
\end{equation}

 For the definitions and properties of division polynomials, we may refer to \cite[\S3]{IK22} or \cite[Ch.III, Exercise~3.7]{Silverman}.

For our purposes, we will review the essential properties of the $m$th division polynomial $\psi_{m,s,t}$ of the elliptic curves $\mathcal{E}_{s,t}$. For each positive integer $m$, the $m$th division polynomial  $\psi_{m,s,t}$ of $\mathcal{E}_{s,t}$ is separable (over an  algebraic closure of the base field over which the given elliptic curve is defined), and its zeros are the $x$-coordinates of the $m$-torsion points.

For odd $m$, $\psi_{m,s,t}\left(x\right) \in \left(\bbz\left[s,t\right]\right)\left[x\right]$ has degree $\frac{m^{2}-1}{2}$ in $x$, and leading coefficient $m$. Its zeros are the $x$-coordinates of the points in $\mathcal{E}_{s,t}\left[m\right] \setminus \left\{O\right\}$.

For even $m$, $\psi_{m,s,t}\left(x,y\right) \in \left(\bbz\left[s,t\right]\right)\left[x,y\right]$ is divisible by $y$, and $\frac{\psi_{m,s,t}}{y} \in \left(\bbz\left[s,t\right]\right)\left[x\right]$ is a polynomial of  degree $\frac{m^{2}-4}{2}$ in $x$, with  leading coefficient $m$, and it is also separable. Its zeros are the $x$-coordinates of the points in $\mathcal{E}_{s,t}\left[m\right] \setminus \mathcal{E}_{s,t}\left[2\right]$.

Another important property of $\psi_{m,s,t} \in \left(\bbz\left[s,t\right]\right)\left[x,y\right]$ is being weight-homogeneous (\cite[\S3]{IK22}). To be more precise, we assign weights to the variables as follows;  $\wt\left(x\right) = 1$, $\wt\left(s\right) = 2$, and $\wt\left(t\right) = 3$. Consequently, 
\begin{align}\label{homog}
	&\psi_{m,s,t}\text{ is a weight-homogeneous polynomial of weight } \frac{m^{2}-1}{2} \text{ for odd } m, \\\notag
	&\text{ while } \frac{\psi_{m,s,t}}{y} \text{ is a weight-homogeneous polynomial of weight } \frac{m^{2}-4}{2} \text{ for even } m.
\end{align}

Additionally, division polynomials play a crucial role in expressing the formulas for the $x$-coordinates under the multiplication-by-$m$ map, which we will denote as the  $\left[m\right]$-map. Before we derive this formula, we note that the product $\psi_{m-1,s,t}\left(x,y\right)\psi_{m+1,s,t}\left(x,y\right)$ is a polynomial in only $s$, $t$, and $x$. For even $m$, both $\psi_{m-1,s,t}\left(x,y\right)$ and $\psi_{m+1,s,t}\left(x,y\right)$ are polynomials in the variables $s$, $t$, and $x$. For odd $m$, we have; \begin{align*}
	\psi_{m-1,s,t}\left(x,y\right)\psi_{m+1,s,t}\left(x,y\right) & = y^{2} \cdot \frac{\psi_{m-1,s,t}\left(x,y\right)}{y} \cdot \frac{\psi_{m-1,s,t}\left(x,y\right)}{y} \\
	\notag & = \left(x^{3} + sx + t\right) \left(\frac{\psi_{m-1,s,t}}{y}\right)\left(x\right) \left(\frac{\psi_{m-1,s,t}}{y}\right)\left(x\right) \in \left(\bbz\left[s,t\right]\right)\left[x\right].    
\end{align*} We denote this product $\psi_{m-1,s,t}\left(x,y\right)\psi_{m+1,s,t}\left(x,y\right)$ by $\left( \psi_{m-1,s,t} \cdot \psi_{m+1,s,t}\right)\left(x\right)$. For a point $\left(x,y\right) \in \mathcal{E}_{s,t}$, we express the $x$-coordinate of the image of $(x,y)$ under the $\left[m\right]$-map as $$
	x\left(\left[m\right]\left(x,y\right)\right) = x - \frac{\left( \psi_{m-1,s,t} \cdot \psi_{m+1,s,t}\right)}{\psi_{m,s,t}^{2}} \left(x\right).
$$ In other words, if $\tx$ is a variable of weight $1$, a zero of the weight-homogeneous polynomial $-\left( \psi_{m-1,s,t} \cdot \psi_{m+1,s,t}\right)\left(x\right) + x\psi_{m,s,t}^{2}\left(x\right) -\tx\psi_{m,s,t}^{2}\left(x\right)$ in the variable $x$ is the $x$-coordinate of a point $\mathcal{S} \in \mathcal{E}_{s,t}$ such that $x\left(\left[m\right]\mathcal{S}\right) = \tx$.

Since we are considering elliptic curves defined over a field with positive characteristic~$p$, it is necessary to examine the mod-$p$ reduction of division polynomials. Obviously, if $\overline{\psi_{m,s,t}\left(x,y\right)}$ denotes the mod-$p$ reduction  of $\psi_{m,s,t}\left(x,y\right)$, then $\overline{\psi_{m,s,t}\left(x,y\right)} \in \left(\Fp\left[s,t\right]\right)\left[x,y\right] \subseteq \left(K\left[s,t\right]\right)\left[x,y\right]$ and it  has the same general properties as described previously, except for those concerning the  leading coefficients and degrees. In the case when $m=p$, we are interested in the following mod-$p$ polynomials: 
\begin{equation} \label{eqn:poly1} 
	\overline{\psi_{p,s,t}\left(x\right)} \text{ and }
\end{equation}
\begin{equation} \label{eqn:poly2} 
	\overline{-\left(\psi_{p-1,s,t}\cdot\psi_{p+1,s,t}\right)\left(x\right) + x\psi_{p,s,t}^{2}\left(x\right) -\tx\psi_{p,s,t}^{2}\left(x\right)}.
\end{equation}
We note that the zeros of the polynomial \eqref{eqn:poly1} are the $x$-coordinates of the points in~$E_{s,t}$ of order $p$,  and the zeros of the polynomial \eqref{eqn:poly2} are the $x$-coordinates of points $S$ in $E_{s,t}$ such that $x\left(\left[p\right]S\right) = \tx$.

Before finding the explicit descriptions of \eqref{eqn:poly1} and \eqref{eqn:poly2}, we introduce some notation. Note that the group $\mathcal{E}_{s,t}\left[p\right]$ is isomorphic to $\left(\bbz/p\bbz\right)^{2}$. Moreover, since we focus on the $x$-coordinates of the $p$-torsion points of $\mathcal{E}_{s,t}$, and for any point $\mathcal{R} \in \mathcal{E}_{s,t}$ the $x$-coordinates of $\mathcal{R}$ and $-\mathcal{R}$ are the same,  we identify the quotient $\left(\bbz/p\bbz\right)^{2}/\left\{\pm1\right\}$ as the set of cosets of $\mathcal{E}_{s,t}\left[p\right]$ in which $\mathcal{R}$ is identified with $-\mathcal{R}$. The elements of $\left(\bbz/p\bbz\right)^{2}/\left\{\pm1\right\}$ will be denoted by $\overline{\left(a,b\right)}$. Similarly, the elements of $\left(\bbz/p\bbz\right)^{\times}/\left\{\pm1\right\}$ are denoted by $\overline{a}$ where there is no confusion.

We fix an algebraic closure $\Kalg$ of $K$. To clarify the notion of weights and degrees: variables such as $s$, $t$, $\tx$, $x$, and $X$ will appear, and the weight of a monomial in these variables refers to the sum of the  weights assigned when we define them; $$
	\wt\left(s\right) = 2, ~ \wt\left(t\right) = 3, ~\wt\left(\tx\right) = 1, ~\wt\left(x\right) = 1,~ \text{ and } ~\wt\left(X\right) = p.
$$ We say that a polynomial is {\it weight-homogeneous} if it is a sum of monomials of the same weight. All polynomials to be discussed will be weight-homogeneous. If we consider the polynomials as polynomials in a single variable $x$ or $X$ with  coefficients that are polynomials in $s$, $t$, and $\tx$, then the term `degree' refers to the degree in $x$ or $X$.

Now, we give the description of the polynomials~\eqref{eqn:poly1} and~\eqref{eqn:poly2}, starting with that of~\eqref{eqn:poly1} as follows:

\begin{lemma}\label{lem:poly1}
	Let $X$ be a variable of weight $p \ge 3$. Then, there exists a weight-homogeneous polynomial $$
		\theta_{s,t}\left(X\right) := a_{\frac{p-1}{2}} X^{\frac{p-1}{2}} + \left(\sum_{i=1}^{\frac{p-3}{2}}a_{\frac{p-1}{2}+pi} X^{\frac{p-1}{2}-i}\right) + a_{\frac{p^{2}-1}{2}},
	$$ with  weight-homogeneous polynomial coefficients $a_{k} = a_{k}\left(s,t\right) \in \Fp\left[s,t\right]$ of weight $k$ such that $$
		\theta_{s,t}\left(x^{p}\right)=\overline{\psi_{p,s,t}\left(x\right)} \text{ of \eqref{eqn:poly1}}.
	$$
\end{lemma}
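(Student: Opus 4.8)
The key structural facts I'd use are: (i) $\psi_{p,s,t}(x) \in (\bbz[s,t])[x]$ is weight-homogeneous of weight $\frac{p^2-1}{2}$ (from \eqref{homog}), with degree $\frac{p^2-1}{2}$ in $x$ and leading coefficient $p$; (ii) the zeros of $\overline{\psi_{p,s,t}(x)}$ are the $x$-coordinates of the $p$-torsion points of $E_{s,t}$, and since $E_{s,t}$ is ordinary (being the generic fiber, or because this can be checked over $\Kst$), $E_{s,t}[p]\cong \bbz/p\bbz$, so there are exactly $\frac{p-1}{2}$ distinct such $x$-coordinates; (iii) the multiplication-by-$p$ map in characteristic $p$ factors through Frobenius. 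The plan is to show that $\overline{\psi_{p,s,t}(x)}$ is a polynomial in $x^p$ — i.e. that every exponent of $x$ occurring with nonzero coefficient is divisible by $p$ — and then read off the shape of the resulting polynomial $\theta_{s,t}(X)$ in $X = x^p$ from weight-homogeneity and degree count.

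\textbf{Step 1: $\overline{\psi_{p,s,t}}$ is a polynomial in $x^p$.} Since $\overline{\psi_{p,s,t}}$ is separable with $\frac{p^2-1}{2}$ roots (counted with multiplicity it has degree dividing $\frac{p^2-1}{2}$; I expect the reduction to drop degree since the leading coefficient $p$ vanishes mod $p$), but only $\frac{p-1}{2}$ \emph{distinct} root-values in $\Kst^{\alg}$ up to sign — wait, more carefully: the roots of $\overline{\psi_{p,s,t}}$ as a polynomial are precisely the $x$-coordinates of points of order $p$, of which (for ordinary $E_{s,t}$) there are $\frac{p-1}{2}$. Each such $x$-coordinate, as a root of the separable polynomial $\overline{\psi_{p,s,t}}$... the cleanest route is: the $[p]$-map on $E_{s,t}$ factors as $E_{s,t} \xrightarrow{\mathrm{Frob}} E_{s,t}^{(p)} \xrightarrow{V} E_{s,t}$, so $x([p]P)$ is a rational function of $x(P)^p$ (and $y(P)^p$), and writing out the division-polynomial formula $x([p](x,y)) = x - (\psi_{p-1}\psi_{p+1})/\psi_p^2$ shows $\overline{\psi_{p,s,t}(x)^2}$ divides something expressible in $x^p$; combined with separability one deduces $\overline{\psi_{p,s,t}(x)} = \theta_{s,t}(x^p)$ for some polynomial $\theta_{s,t}$. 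Alternatively — and perhaps more elementary — one cites or reproves the classical fact (e.g. from the recursion for $\psi_m$, or from \cite[Ch.III, Exercise 3.7]{Silverman} worked mod $p$) that $\overline{\psi_{p,s,t}(x)}$ is a $p$-th power polynomial in $x$ times a constant, or at least a polynomial in $x^p$.

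\textbf{Step 2: degree and weight bookkeeping.} Once $\overline{\psi_{p,s,t}(x)} = \theta_{s,t}(x^p)$ with $X := x^p$ of weight $p$, the degree in $x$ is $p \cdot \deg_X \theta_{s,t}$. Since the $\frac{p-1}{2}$ distinct $x$-coordinates of the $p$-torsion appear, and $\overline{\psi_{p,s,t}}$ is separable of the appropriate degree — here I'd pin down $\deg_x \overline{\psi_{p,s,t}} = \frac{p(p-1)}{2}$ (the degree drop from $\frac{p^2-1}{2}$ reflecting the loss of $\frac{p-1}{2}$ from the leading term, i.e. the separable degree of $[p]$ is $p$ so $x([p](x,y)) = \tx$ has $p$ solutions for $x^p$ giving... ) — so $\deg_X \theta_{s,t} = \frac{p-1}{2}$. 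Weight-homogeneity of $\psi_{p,s,t}$ of weight $\frac{p^2-1}{2}$ forces $\theta_{s,t}(X)$, as a polynomial in $X$ (weight $p$), $s$ (weight $2$), $t$ (weight $3$), to be weight-homogeneous of weight $\frac{p^2-1}{2}$; the coefficient of $X^{\frac{p-1}{2}-i}$ must then be a polynomial in $s,t$ of weight $\frac{p^2-1}{2} - p(\frac{p-1}{2}-i) = \frac{p-1}{2} + pi$, which is exactly the claimed index $a_{\frac{p-1}{2}+pi}$; for $i=0$ the weight is $\frac{p-1}{2}$, and $a_{\frac{p-1}{2}}$ is a scalar times an appropriate monomial — and one checks $a_{\frac{p-1}{2}} \ne 0$ (it governs the leading behaviour, so it is a nonzero element of $\Fp$, in fact $\pm 1$), while $a_{\frac{p^2-1}{2}}$ is the constant term of $\theta_{s,t}$ in $X$, of weight $\frac{p^2-1}{2}$. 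That $a_k \in \Fp[s,t]$ rather than just $\Kalg[s,t]$ follows since $\psi_{p,s,t}$ has coefficients in $\bbz[s,t]$ so $\overline{\psi_{p,s,t}} \in \Fp[s,t][x]$.

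\textbf{Main obstacle.} The crux is Step 1 — rigorously establishing that $\overline{\psi_{p,s,t}(x)}$ is genuinely a polynomial in $x^p$, not merely that its \emph{roots} have multiplicities making it a perfect power (which separability would contradict). The cleanest argument goes through the Frobenius factorization of $[p]$: since in characteristic $p$ the map $[p]$ kills no separable part, $x([p](x,y))$ depends on $(x,y)$ only through $(x^p,y^p)$, hence through $x^p$ alone after using the curve equation; then the identity $x - x([p](x,y)) = (\psi_{p-1,s,t}\psi_{p+1,s,t}/\psi_{p,s,t}^2)(x)$, reduced mod $p$, exhibits the right-hand side as a rational function of $x^p$, and a gcd/degree argument (using separability of $\overline{\psi_{p,s,t}}$, which forces it to have no repeated factors, hence to divide the denominator cleanly) yields $\overline{\psi_{p,s,t}(x)} \mid (\text{something in } x^p)$ and finally equality up to the known degree. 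I would present this factorization argument carefully, as it is the one genuinely non-formal input; the rest is the weight/degree bookkeeping of Step 2, which is routine.
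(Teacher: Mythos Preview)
Your Frobenius-factorization strategy is genuinely different from the paper's and can in principle be made rigorous, but the execution contains two real errors.

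First, you repeatedly assert that $\overline{\psi_{p,s,t}}$ is separable (``using separability of $\overline{\psi_{p,s,t}}$, which forces it to have no repeated factors''). This is false and in fact contradicts your own conclusion: any nonconstant polynomial in $x^p$ over a field of characteristic $p$ has identically zero derivative and is \emph{never} separable; each of the $\tfrac{p-1}{2}$ roots of $\overline{\psi_{p,s,t}}$ occurs with multiplicity exactly $p$. Your gcd/separability sketch therefore does not go through. A correct version of your Step~1 runs as follows: writing $\phi:=x\psi_{p,s,t}^{2}-\psi_{p-1,s,t}\psi_{p+1,s,t}$, the factorization $[p]=V\circ\bfF$ gives $\overline{\phi}(x)/\overline{\psi_{p,s,t}}(x)^{2}=h(x^{p})$ for some degree-$p$ rational function $h$; since $\phi$ is monic of degree $p^{2}$ over $\bbz$, one has $\deg_{x}\overline{\phi}=p^{2}$, which equals the numerator degree of $h(x^{p})$ in lowest terms and forces $\gcd\bigl(\overline{\phi},\overline{\psi_{p,s,t}}^{2}\bigr)=1$; hence $\overline{\psi_{p,s,t}}^{2}$ is itself a polynomial in $x^{p}$, and differentiating yields $2\,\overline{\psi_{p,s,t}}\cdot\overline{\psi_{p,s,t}}{}'=0$, so $\overline{\psi_{p,s,t}}{}'=0$. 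Second, your parenthetical claim that $a_{(p-1)/2}$ is ``a nonzero element of $\Fp$, in fact $\pm 1$'' is wrong: it is weight-homogeneous of weight $\tfrac{p-1}{2}>0$, hence a \emph{nonconstant} polynomial in $s,t$; its zero locus is precisely the supersingular locus, and this nonconstancy is exactly what powers the Eisenstein-type irreducibility arguments later in Section~\ref{sec:proof}.

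The paper takes a different route, lifting to characteristic zero. It first shows the constant term $A_{(p^{2}-1)/2}$ of $\psi_{p,s,t}$ is not divisible by $p$ (otherwise every specialization $E_{s_{0},t_{0}}$ would have a $p$-torsion point with $x$-coordinate $0$, contradicting the existence of supersingular curves), then passes to the reversed monic polynomial $\Psi_{s,t}(x)=x^{(p^{2}-1)/2}\psi_{p,s,t}(1/x)/A_{(p^{2}-1)/2}$ over the localization of $\bbz[s,t]$ at $p$. Choosing generators $\mathcal{P}_{1},\mathcal{Q}_{1}$ of $\mathcal{E}_{s,t}[p]$ with $\mathcal{Q}_{1}$ in the kernel of reduction, one factors $\Psi_{s,t}$ over the integral closure and reduces modulo a prime above $p$: the reciprocal roots $1/x([b]\mathcal{Q}_{1})$ reduce to $0$, while the remaining factors collapse in $p$-fold groups to $\bigl(x^{p}-1/x([a]P_{1})^{p}\bigr)$, directly exhibiting $\overline{\psi_{p,s,t}}$ as a polynomial in $x^{p}$ of exact degree $\tfrac{p-1}{2}$ in $X=x^{p}$. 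Your route is more conceptual once repaired; the paper's is more hands-on and simultaneously delivers the degree and the nonvanishing of the leading coefficient needed downstream.
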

\begin{proof}
	For convenience, set  $h:=\frac{p^{2}-1}{2}$. Since $\psi_{p,s,t}\left(x\right)$ is weight-homogeneous of weight $h$ by~\eqref{homog}, there are weight-homogeneous polynomials $A_{k} \in \bbz\left[s,t\right]$ of weight $k$ such that 
 \begin{equation}\label{eq:psi}
		\psi_{p,s,t}\left(x\right) = \sum_{k=0}^{h} A_{k} x^{h-k}.
	\end{equation}
We show that $p \nmid A_{h}$. If $p \mid A_{h}$, then the polynomial $\overline{\psi_{p,s,t}}\left(x\right) \in \left(\Fp\left[s,t\right]\right)\left[x\right]$ would be divisible by $x$. This implies that for any $s_{0},t_{0} \in \Kalg$ such that $4s_{0}^{3}+27t_{0}^{2} \ne 0$, $0$ is a zero of $\overline{\psi_{p,s,t}}\left(x\right)$ and $E_{s_{0},t_{0}}\left(\Kalg\right)$ has a point $\left(0,\sqrt{t_{0}}\right)$ of order~$p$. This contradicts the existence of a supersingular elliptic curve by Lemma~\ref{lem:ssj}. Hence, $p \nmid A_{h}$ and so $A_{h} \ne 0$. So we consider the monic polynomial $$
	\Psi_{s,t}\left(x\right) := \sum_{k=0}^{h} \frac{A_{k}}{A_{h}} x^{k} = \frac{\psi_{p,s,t}\left(x^{-1}\right)}{A_{h}}x^{h}
$$ with coefficients in the local Dedekind domain $$
	D := \left\{a/d: a,d \in \bbz\left[s,t\right], p \nmid d\right\}.
$$
Now, considering the group structures of $E_{s,t}\left[p\right] \cong \bbz/p\bbz$ and $\mathcal{E}_{s,t}\left[p\right] \cong \left(\bbz/p\bbz\right)^{2}$, the kernel of  the map, $\operatorname{red}_{p}: \mathcal{E}_{s,t}\left[p\right] \to E_{s,t}\left[p\right]$ given in \eqref{red-map} contains a non-trivial point $\mathcal{Q}_{1}$. Then, we fix a point $\mathcal{P}_{1} \in \mathcal{E}_{s,t}\left[p\right]$ such that $\mathcal{P}_{1}$ and $\mathcal{Q}_{1}$ generate $\mathcal{E}_{s,t}\left[p\right]$. The zeros of $\Psi_{s,t}$ are the reciprocals of the zeros of $\psi_{p,s,t}$, which are the $x$-coordinates $x\left(\left[a\right]\mathcal{P}_{1}+\left[b\right]\mathcal{Q}_{1}\right)$ of  $p$-torsion points $\left[a\right]\mathcal{P}_{1}+\left[b\right]\mathcal{Q}_{1}$ for $\overline{\left(a,b\right)} \in \left(\left(\bbz/p\bbz\right)^{2}/\left\{\pm 1\right\}\right) \setminus \left\{\overline{\left(0,0\right)}\right\}$. We note that since $A_{h} \ne 0$, any zero of $\psi_{p,s,t}$ is non-zero, and so  \begin{align*}
		\Psi_{s,t}\left(x\right)
		& = \prod_{\overline{\left(a,b\right)} \in \left(\left(\bbz/p\bbz\right)^{2}/\left\{\pm 1\right\}\right) \setminus \left\{\overline{\left(0,0\right)}\right\} } \left(x - \frac{1}{x\left(\left[a\right]\mathcal{P}_{1}+\left[b\right]\mathcal{Q}_{1}\right)}\right)\\
		& = \prod_{\overline{b} \in \Fpx/\left\{\pm 1\right\}} \left(x - \frac{1}{x\left(\left[b\right]\mathcal{Q}_{1}\right)} \right)
		\prod_{\overline{a} \in \Fpx/\left\{\pm 1\right\}} \prod_{b \in \bbz/p\bbz}\left(x -\frac{1}{x\left(\left[a\right]\mathcal{P}_{1}+\left[b\right]\mathcal{Q}_{1}\right)} \right).
	\end{align*} 
Since $\Psi_{s,t}$ is monic and its coefficients lie in $D$, if we let $\mathfrak{D}$ be the integral closure  of $D$ in the splitting field of $\Psi_{s,t}\left(x\right)$ over the field of fractions of $D$, then the zeros of  $\Psi_{s,t}$  belong to  $\mathfrak{D}$. For a prime ideal $\mathfrak{p} \subseteq \mathfrak{D}$ lying above the prime ideal $pD \subseteq D$, and for the image $P_{1}$ of $\mathcal{P}_{1}$ under~$\operatorname{red}_{p}$, we have: $$\frac{1}{x\left(\mathcal{P}_{1}\right)} \equiv \frac{1}{x\left(P_{1}\right)} \pmod{\mathfrak{p}}, ~\text{ and } ~\frac{1}{x\left(\mathcal{Q}_{1}\right)} \equiv 0 \pmod{\mathfrak{p}}.$$ Therefore, we obtain modulo $\mathfrak{p}$; \begin{align*}
		\Psi_{s,t}\left(x\right)
		& \equiv \left(\prod_{\overline{b} \in \Fpx/\left\{\pm 1\right\}} x\right) \prod_{\overline{a} \in \Fpx/\left\{\pm 1\right\}} \prod_{b \in \bbz/p\bbz} \left(x - \frac{1}{x\left(\left[a\right]P_{1} \right)}\right)\\
		& \equiv x^{\frac{p-1}{2}} \prod_{\overline{a} \in \Fpx/\left\{\pm 1\right\}} \left(x^{p} - \frac{1}{\left(x\left(\left[a\right]P_{1}\right)\right)^{p}}\right) \pmod{\mathfrak{p}}.
	\end{align*} Consequently, we have $$
		\psi_{p,s,t}\left(x\right) \equiv A_{h} \cdot \Psi_{s,t}\left(x^{-1}\right) x^{h} \equiv A_{h} \prod_{\overline{a} \in \Fpx/\left\{\pm 1\right\}} \left(1 - \frac{x^{p}}{\left(x\left(\left[a\right]P_{1}\right)\right)^{p}}\right) \pmod{\mathfrak{p}},
	$$
 which implies that $\overline{\psi_{p,s,t}\left(x\right)}$ is a polynomial in $x^{p}$, so from~\eqref{eq:psi}, $\overline{\psi_{p,s,t}\left(x\right)} = {\displaystyle \sum_{i=0}^{\frac{p-1}{2}} } \overline{A_{h - pi}} x^{pi}$. 
By letting $\theta_{s,t}\left(X\right):= a_{\frac{p-1}{2}}X^{\frac{p-1}{2}} + \left( {\displaystyle \sum_{i=0}^{\frac{p-1}{2}}} a_{\frac{p-1}{2}+pi} X^{\frac{p-1}{2} - i}\right) + a_{\frac{p^{2}-1}{2}}$, where $a_{k} = \overline{A_{k}} \in \Fp\left[s,t\right]$, it completes the proof.
\end{proof}

Next, we give the description of the polynomial~\eqref{eqn:poly2}.
\begin{lemma}\label{lem:poly2}
	Let $\tx$ and $X$ be  variables of weight $1$ and of weight $p \ge 3$, respectively. Then, there exists  a weight-homogeneous polynomial $$
		\eta_{s,t,\tx}\left(X\right) := X^{p} + \left(\sum_{i=1}^{p-1}\left(b_{pi} + \tx c_{pi-1}\right) X^{p-i}\right) + \left(b_{p^{2}} + \tx c_{p^{2}-1}\right),
	$$ with weight-homogeneous polynomial coefficients $b_{k},c_{k} \in \Fp\left[s,t\right]$ of weight $k$ such that $$
		\eta_{s,t,\tx}\left(x^{p}\right) =	\overline{-\left(\psi_{p+1,s,t}\psi_{p-1,s,t}\right)\left(x\right) + x\psi_{p,s,t}^{2}\left(x\right) - \tx \psi_{p,s,t}^{2}\left(x\right)} \text{ of \eqref{eqn:poly2}}.
	$$ In particular, the polynomial coefficients $c_{k}$ satisfy  $$
		-\left(\overline{\psi_{p,s,t}(x)}\right)^2=-\left(x^{p\frac{p-1}{2}} + \left(\sum_{i=1}^{\frac{p-3}{2}}a_{\frac{p-1}{2}+pi} x^{p\frac{p-1}{2}-pi}\right) + a_{\frac{p^{2}-1}{2}}\right)^{2} = \left(\sum_{i=1}^{p-1}c_{pi-1} x^{p^{2}-pi}\right) + c_{p^{2}-1},
	$$ where the coefficients $a_{k}$ are given in Lemma~\ref{lem:poly1}.
\end{lemma}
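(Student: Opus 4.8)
The plan is to separate the purely formal features of~\eqref{eqn:poly2} from the one substantive input. Write $F$ for the polynomial~\eqref{eqn:poly2}. First, $F$ is the mod-$p$ reduction of $-\left(\psi_{p-1,s,t}\psi_{p+1,s,t}\right)\left(x\right)+x\,\psi_{p,s,t}^{2}\left(x\right)-\tx\,\psi_{p,s,t}^{2}\left(x\right)\in\left(\bbz\left[s,t\right]\right)\!\left[\tx\right]\!\left[x\right]$, each of whose three summands is weight-homogeneous of weight $p^{2}$; hence $F\in\left(\Fp\left[s,t\right]\right)\!\left[\tx\right]\!\left[x\right]$ is weight-homogeneous of weight $p^{2}$, and it is linear in $\tx$ since only the last summand involves $\tx$. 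By Lemma~\ref{lem:poly1}, $\overline{\psi_{p,s,t}^{2}\!\left(x\right)}=\theta_{s,t}\!\left(x^{p}\right)^{2}$ has $x$-degree $\le p^{2}-p$, so $\overline{x\,\psi_{p,s,t}^{2}\!\left(x\right)}$ and $\overline{\tx\,\psi_{p,s,t}^{2}\!\left(x\right)}$ have $x$-degree $<p^{2}$, whereas $\psi_{p-1,s,t}\psi_{p+1,s,t}$ has $x$-degree $p^{2}$ with leading coefficient $\left(p-1\right)\left(p+1\right)\equiv-1\pmod p$; hence $F$ is monic of degree $p^{2}$ in $x$.

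The key claim is that $F$ is a polynomial in $x^{p}$. The function $x\circ\left[p\right]$, that is $\left(x,y\right)\mapsto x\!\left(\left[p\right]\!\left(x,y\right)\right)$, is invariant under $\left[-1\right]$ and so lies in $\Kst\!\left(x\right)$, and the classical identity (valid over $\bbz\left[s,t\right]$, hence after reduction over $\Kst$) gives $x\circ\left[p\right]=x-\overline{\left(\psi_{p-1,s,t}\psi_{p+1,s,t}\right)\!\left(x\right)}\big/\,\overline{\psi_{p,s,t}^{2}\!\left(x\right)}$, with nonzero denominator since $a_{\frac{p^{2}-1}{2}}\neq 0$ by the proof of Lemma~\ref{lem:poly1}. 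In characteristic $p$ one has $\left[p\right]^{*}\omega=p\,\omega=0$ for the invariant differential $\omega=dx/y$ (see \cite[Ch.~III, \S5]{Silverman}), and $\left[p\right]^{*}\omega=d\!\left(x\circ\left[p\right]\right)/\left(y\circ\left[p\right]\right)$, so $\frac{d}{dx}\!\left(x\circ\left[p\right]\right)=0$ in $\Kst\!\left(x\right)$. Since $\overline{\psi_{p,s,t}^{2}\!\left(x\right)}=\theta_{s,t}\!\left(x^{p}\right)^{2}$ has vanishing $x$-derivative, the quotient rule yields $\frac{d}{dx}\overline{\left(\psi_{p-1,s,t}\psi_{p+1,s,t}\right)\!\left(x\right)}=\overline{\psi_{p,s,t}^{2}\!\left(x\right)}$, and therefore $\frac{dF}{dx}=-\overline{\psi_{p,s,t}^{2}\!\left(x\right)}+\overline{\psi_{p,s,t}^{2}\!\left(x\right)}=0$. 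In characteristic $p$ a polynomial in $x$ over $\Fp\left[s,t\right]\!\left[\tx\right]$ with zero $x$-derivative is a polynomial in $x^{p}$; thus $F=\eta_{s,t,\tx}\!\left(x^{p}\right)$ for a unique $\eta_{s,t,\tx}\!\left(X\right)\in\left(\Fp\left[s,t\right]\right)\!\left[\tx\right]\!\left[X\right]$. (Differentials can be avoided: in characteristic $p$, $\left[p\right]=V\circ\mathrm{Frob}$ with $\mathrm{Frob}\colon\left(x,y\right)\mapsto\left(x^{p},y^{p}\right)$, so $x\circ\left[p\right]=\phi\!\left(x^{p}\right)$ for some $\phi\!\left(X\right)\in\Kst\!\left(X\right)$; multiplying through by $\theta_{s,t}\!\left(x^{p}\right)^{2}$ and using that $\Kst\!\left[x\right]$ is free over $\Kst\!\left[x^{p}\right]$ with basis $1,x,\dots,x^{p-1}$ shows $x\,\overline{\psi_{p,s,t}^{2}\!\left(x\right)}-\overline{\left(\psi_{p-1,s,t}\psi_{p+1,s,t}\right)\!\left(x\right)}\in\Kst\!\left[x^{p}\right]$, and adding $-\tx\,\theta_{s,t}\!\left(x^{p}\right)^{2}$ gives the claim.)

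From the foregoing, $\eta_{s,t,\tx}\!\left(X\right)$ is monic of degree $p$ in $X$, and its $X^{p}$-coefficient is the $\tx$-free constant $1$, because the $\tx$-part of $F$, namely $-\tx\,\theta_{s,t}\!\left(x^{p}\right)^{2}$, has $x$-degree $p\!\left(p-1\right)<p^{2}$ and hence contributes nothing there. Writing the coefficient of $X^{p-i}$ (for $1\le i\le p$) as $b+\tx\,c$ with $b,c\in\Fp\left[s,t\right]$ weight-homogeneous, the identities $\wt\!\left(X\right)=p$ and $\wt\!\left(F\right)=p^{2}$ force $\wt\!\left(b\right)=pi$ and $\wt\!\left(c\right)=pi-1$, so renaming $b=b_{pi}$ and $c=c_{pi-1}$ yields precisely the stated form of $\eta_{s,t,\tx}$. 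For the final assertion, compare the coefficient of $\tx$ on the two sides of $F=\eta_{s,t,\tx}\!\left(x^{p}\right)$: on the left it is $-\overline{\psi_{p,s,t}^{2}\!\left(x\right)}=-\left(\overline{\psi_{p,s,t}\!\left(x\right)}\right)^{2}$ (only $-\tx\,\psi_{p,s,t}^{2}$ contributes), and on the right it is $\sum_{i=1}^{p-1}c_{pi-1}x^{p\left(p-i\right)}+c_{p^{2}-1}=\sum_{i=1}^{p-1}c_{pi-1}x^{p^{2}-pi}+c_{p^{2}-1}$; substituting the description $\overline{\psi_{p,s,t}\!\left(x\right)}=\theta_{s,t}\!\left(x^{p}\right)$ from Lemma~\ref{lem:poly1} then gives the displayed chain of identities.

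The only non-formal step is the middle one: that~\eqref{eqn:poly2} is a polynomial in $x^{p}$ rests on the inseparability of multiplication-by-$p$ in characteristic $p$---equivalently $\left[p\right]^{*}\omega=0$, equivalently the factorization of $\left[p\right]$ through the $p$-power Frobenius. The rest is bookkeeping of degrees, weights, and the $\tx$-degree, together with Lemma~\ref{lem:poly1}.
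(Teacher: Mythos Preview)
Your proof is correct, and the route is genuinely different from the paper's. The paper establishes that \eqref{eqn:poly2} is a polynomial in $x^{p}$ by lifting to characteristic~$0$: it factors $\sigma_{s,t,\tx}(x)=-\left(\psi_{p-1,s,t}\psi_{p+1,s,t}\right)(x)+x\psi_{p,s,t}^{2}(x)-\tx\psi_{p,s,t}^{2}(x)$ over $\bbq(s,t,\tx)^{\alg}$ as $\prod_{(a,b)}\bigl(x-x(\mathcal{S}+[a]\mathcal{P}_{1}+[b]\mathcal{Q}_{1})\bigr)$, works over the integral closure of a localized $\bbz[s,t,\tx]$, and then reduces modulo a prime above $p$; since $\mathcal{Q}_{1}$ reduces to the identity, the factorization collapses to $\prod_{a}\bigl(x^{p}-(x(S+[a]P_{1}))^{p}\bigr)$, visibly a polynomial in $x^{p}$. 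You instead stay in characteristic $p$ and use the inseparability of $[p]$ directly: either $[p]^{*}\omega=0$ forces $\tfrac{d}{dx}(x\circ[p])=0$, which combined with $\tfrac{d}{dx}\theta_{s,t}(x^{p})^{2}=0$ yields $\tfrac{dF}{dx}=0$; or the factorization $[p]=V\circ\mathrm{Frob}$ gives $x\circ[p]\in\Kst(x^{p})$ immediately. Your argument is cleaner and avoids the Dedekind-domain and integral-closure machinery; the paper's approach has the virtue of being methodologically parallel to its proof of Lemma~\ref{lem:poly1} and of making the role of the $p$-torsion structure explicit. The remaining bookkeeping (weights, degrees, the $\tx$-linear decomposition, and the identification of the $c_{k}$) is handled the same way in both.
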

\begin{proof}
	First, note that the polynomial \begin{align*}
		\left(\psi_{p+1,s,t}\cdot \psi_{p-1,s,t}\right)\left(x\right) = \left(x^{3}+sx+t\right) \left(\frac{\psi_{p-1,s,t}}{y}\right)\left(x\right) \left(\frac{\psi_{p+1,s,t}}{y}\right)\left(x\right)
	\end{align*} has degree $p^{2}$ in $x$ and leading coefficient $p^{2}-1$. We choose two points $\mathcal{P}_{1}$ and $\mathcal{Q}_{1}$ which generate $\mathcal{E}_{s,t}\left[p\right]$ as in the proof of Lemma~\ref{lem:poly1}. 
	For a point  $\mathcal{R} \in \mathcal{E}_{s,t}\left(\bbq\left(s,t,\tx\right)^{\alg}\right)$ whose $x$-coordinate is $\tx$, let a point $\mathcal{S} \in \mathcal{E}_{s,t}\left(\bbq\left(s,t,\tx\right)^{\alg}\right)$ be such that $\left[p\right] \mathcal{S} = \mathcal{R}$. Then, the zeros of the polynomial \begin{equation}\label{mid-poly}
		\sigma_{s,t,\tx}\left(x\right) := -\left(\psi_{p+1,s,t} \cdot \psi_{p-1,s,t}\right)\left(x\right) + x\psi_{p,s,t}^{2}\left(x\right) - \tx \psi_{p,s,t}^{2}\left(x\right)
	\end{equation}  are the $x$-coordinates of points $\mathcal{S}+\left[a\right]\mathcal{P}_{1}+\left[b\right]\mathcal{Q}_{1}$ for $\left(a,b\right) \in \left(\bbz/p\bbz\right)^{2}$ as explained between \eqref{homog} and \eqref{eqn:poly1}. By comparing the degrees in $x$ of $-\left(\psi_{p+1,s,t}\psi_{p-1,s,t}\right)\left(x\right)$, $x\psi_{p,s,t}^{2}\left(x\right)$, and $\tx \psi_{p,s,t}^{2}\left(x\right)$, we deduce  that the degree of $\sigma_{s,t,\tx}$ is  $p^{2}$ and its leading coefficient is $1-p^{2}$. Therefore, we have $$
		\frac{1}{1-p^{2}}\sigma_{s,t,\tx}\left(x\right)
		= \prod_{\left(a,b\right) \in \left(\bbz/p\bbz\right)^{2}} \left(x-x\left(\mathcal{S} + \left[a\right]\mathcal{P}_{1}+\left[b\right]\mathcal{Q}_{1}\right)\right).
	$$ Let  $D : = \left\{a/d: a,d \in \bbz\left[s,t,\tx\right], p\nmid d \right\}$ which is a local Dedekind domain with the field of fractions $\bbq\left(s,t,\tx\right)$, and let  $\mathfrak{D}$ be the integral closure of $D$ in the splitting field of $\frac{1}{1-p^{2}}\sigma_{s,t,\tx}\left(x\right)$ over $\bbq\left(s,t,\tx\right)$. Since  $\frac{1}{1-p^{2}}\sigma_{s,t,\tx}\left(x\right)$  is monic and integral over $D$, its zeros belong to  $\mathfrak{D}$. 
 
	For a prime ideal $\mathfrak{p} \subseteq \mathfrak{D}$ lying above the prime ideal $pD $ of $D$, let  $P_{1}$ be the images of $\mathcal{P}_{1}$, and $S$ be the image of $\mathcal{S}$ under $\operatorname{red}_{p}: \mathcal{E}_{s,t} \to E_{s,t}$ given in \eqref{red-map}. Then, we have \begin{align*}
		\overline{\sigma_{s,t,\tx}\left(x\right)}\equiv \frac{1}{1-p^{2}}\sigma_{s,t,\tx}\left(x\right)
		& \equiv \prod_{a \in \bbz/p\bbz}\prod_{b \in \bbz/p\bbz} \left(x-x\left(S + \left[a\right]P_{1}\right)\right)\\
		& \equiv \prod_{a \in \bbz/p\bbz} \left(x^{p}-\left(x\left(S+\left[a\right]P_{1}\right)\right)^{p}\right) \pmod{\mathfrak{p}}.
	\end{align*} 
	Thus, $\overline{\sigma_{s,t,\tx}\left(x\right)}$ is a monic polynomial in $x^{p}$. Moreover, by Lemma~\ref{lem:poly1}, $\overline{\psi_{p,s,t}^{2}\left(x\right)}$ is also a polynomial in $x^{p}$. Hence,   $\overline{-\left(\psi_{p+1,s,t}\cdot \psi_{p-1,s,t}\right)\left(x\right) + x\psi_{p,s,t}^{2}\left(x\right)}$ is a polynomial in $x^p$, since it is equal to $\overline{\sigma_{s,t,\tx}\left(x\right)} + \tx \overline{\psi_{p,s,t}^{2}\left(x\right)}$. In conclusion, we have polynomial coefficients $b_{k},c_{k} \in \Fp\left[s,t\right]$ of weight $k$ such that $$
		\overline{-\left(\psi_{p+1,s,t}\cdot \psi_{p-1,s,t}\right)\left(x\right) + x\psi_{p,s,t}^{2}\left(x\right)} = x^{p^{2}} + b_{p} x^{p(p-1)} + b_{2p} x^{p(p-2)} + \cdots + b_{p^{2}-p} x^{p} + b_{p^{2}}
	$$ and $$
		-\overline{\psi_{p,s,t}\left(x\right)}^{2} = c_{p-1} x^{p(p-1)} + c_{2p-1} x^{p(p-2)} + \cdots + c_{p^{2}-p-1} x^{p} + c_{p^{2}-1}, 
	$$ which proves the last statement, and thus, referring to \eqref{mid-poly}, the polynomial $\overline{\sigma_{s,t,\tx}\left(x\right)}$ takes the form, $$
		\overline{\sigma_{s,t,\tx}\left(x\right)} = x^{p^{2}} + \left(\sum_{i=1}^{p-1}\left(b_{pi} + \tx c_{pi-1}\right) x^{p^{2}-pi}\right) + \left(b_{p^{2}} + \tx c_{p^{2}-1}\right)=\eta_{s,t,\tx}(x^p),
	$$
 which completes the proof.
\end{proof}

\section{Proofs of Theorem~\ref{thm:main}, Corollary~\ref{cor:main1}, and Corollary~\ref{cor:main2}}\label{sec:proof}

Throughout this section,  we assume that the base field $K$ has  characteristic $p\geq 5$. To prove Theorem~\ref{thm:main}, we consider the following tower of fields:
{\small 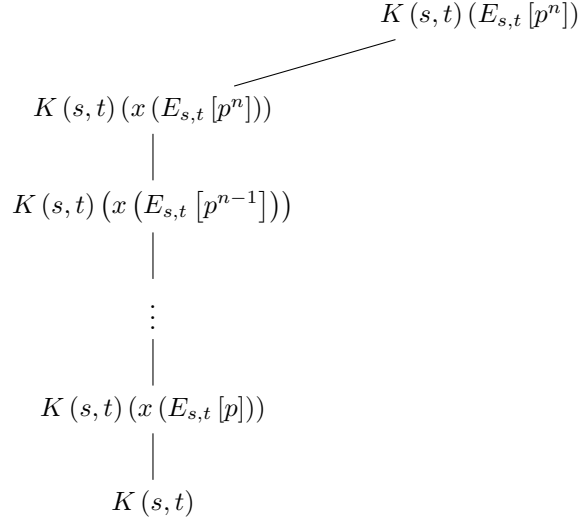
\begin{figure}[H]
	\centering
		\begin{tikzcd}
			& {\Kst\left(E_{s,t}\left[p^{n}\right]\right)} \\
			{\Kst\left(x\left(E_{s,t}\left[p^{n}\right]\right)\right)} \arrow[ru, no head]  &                                              \\
			{\Kst\left(x\left(E_{s,t}\left[p^{n-1}\right]\right)\right)} \arrow[u, no head] &                                              \\
			\vdots \arrow[u, no head]                                                       &                                              \\
			{\Kst\left(x\left(E_{s,t}\left[p\right]\right)\right)} \arrow[u, no head]       &                                              \\
			\Kst \arrow[u, no head]                                                         &                                             
		\end{tikzcd}
	\caption{The tower of subfields of $\Kst\left(E_{s,t}\left[p^{n}\right]\right)$ containing $\Kst$}
	\label{fig:tower}
\end{figure}
}
The tower in Figure~\ref{fig:tower} involves three types of extensions: \begin{itemize}
	\item $\Kst\left(E_{s,t}\left[p\right]\right)/\Kst$,
	\item $\Kst\left(x\left(E_{s,t}\left[p^{k+1}\right]\right)\right)/\Kst\left(x\left(E_{s,t}\left[p^{k}\right]\right)\right)$ for positive $k$, and 
	\item $\Kst\left(E_{s,t}\left[p^{n}\right]\right)/\Kst\left(x\left(E_{s,t}\left[p^{n}\right]\right)\right)$.
\end{itemize}

\

The main focus of our analysis is  the separable and inseparable degrees of these field extensions, as detailed in Corollary~\ref{cor:first_extn}, Corollary~\ref{cor:eta_irr}, and Proposition~\ref{prop:modified_main}, which  lead us to conclude that the separable and inseparable degrees of the extension $\Kst\left(E_{s,t}\left[p^{n}\right]\right)$ over $\Kst$ are $p^{n}$ and $p^{n-1} \left(p-1\right)$, respectively. Since the order of the automorphism group of a normal extension equals its separable degree, and the automorphism group of $\Kst\left(E_{s,t}\left[p^{n}\right]\right)$ over $\Kst$ is  a subgroup of $\left(\bbz/p^{n}\bbz\right)^{\times}$ whose order is $p^{n-1} \left(p-1\right)$, this completes the proof of Theorem~\ref{thm:main}.

\

First, we consider the field extension $\Kst\left(x\left(E_{s,t}\left[p\right]\right)\right)/\Kst$. To do this,  we examine  the relations among 
the  polynomial coefficients~$a_{k}\left(s,t\right)$  of  the polynomial~$\theta_{s,t}$ in Lemma~\ref{lem:poly1}.

\begin{prop}\label{prop:leading_coeff_and_other_coeff} Considering the coefficients $a_{k}\left(s,t\right)$ of the polynomial $\theta_{s,t}\left(X\right)$ obtained in Lemma~\ref{lem:poly1}, 
	if a point  $\left(s_{0},t_{0}\right) \in \left(\Kalg\right)^{2}$ is a zero of $a_{\frac{p-1}{2}}\left(s,t\right)$, then it is also a zero of  $a_{\frac{p-1}{2}+pi}\left(s,t\right)$ for all $1\le i \le \frac{p-3}{2}$, but it is not a zero of $a_{\frac{p^{2}-1}{2}}\left(s,t\right) $, i.e., $a_{\frac{p^{2}-1}{2}}\left(s_{0},t_{0}\right) \ne 0$.
\end{prop}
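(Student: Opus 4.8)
The plan is to read the top coefficient $a_{\frac{p-1}{2}}(s,t)$ of $\theta_{s,t}$ --- which by Lemma~\ref{lem:poly1} is the leading coefficient (in $x$) of $\overline{\psi_{p,s,t}(x)}$ --- as a supersingularity invariant of the family $E_{s,t}$, and to show that at a point where it vanishes the \emph{whole} polynomial $\theta_{s_0,t_0}(X)$ collapses to a nonzero constant; reading off the coefficients of that constant then yields the two assertions at once. So fix $(s_0,t_0)\in(\Kalg)^{2}$ with $a_{\frac{p-1}{2}}(s_0,t_0)=0$. The argument needs $4s_0^{3}+27t_0^{2}\neq 0$, i.e.\ that $E_{s_0,t_0}$ be an elliptic curve over $\Kalg$; this hypothesis is genuinely necessary --- every $a_{k}$ with $k>0$, in particular $a_{\frac{p^{2}-1}{2}}$, vanishes at the origin, so the second assertion literally fails at $(0,0)$ --- but it is the only case relevant to the subsequent analysis of $\Kst\left(x\left(E_{s,t}\left[p\right]\right)\right)/\Kst$, so I would simply add it. I would first record that $\theta_{s_0,t_0}(x^{p})$, which by Lemma~\ref{lem:poly1} is the $p$-th division polynomial of the elliptic curve $E_{s_0,t_0}$, is not the zero polynomial: its roots are the finitely many $x$-coordinates of the nonzero $p$-torsion points of $E_{s_0,t_0}$, so it cannot vanish identically. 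Hence $\theta_{s_0,t_0}(X)\neq 0$ as a polynomial, and this has to be argued from the finiteness of $E_{s_0,t_0}\left[p\right]$ rather than from the $a_{k}$'s, since it is essentially what the second assertion says.

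First I would show that $E_{s_0,t_0}$ is \emph{supersingular}. If not, it is ordinary, so $E_{s_0,t_0}\left[p\right]\cong\bbz/p\bbz$; its $p-1$ nonzero $p$-torsion points pair up under $P\mapsto -P$ (as $p$ is odd), giving exactly $\frac{p-1}{2}$ distinct $x$-coordinates. Each such $x$-coordinate $\xi$ is a root of $\theta_{s_0,t_0}(x^{p})$, hence $\theta_{s_0,t_0}(\xi^{p})=0$; since $x\mapsto x^{p}$ is injective on $\Kalg$, the $\xi^{p}$ are pairwise distinct, so the nonzero polynomial $\theta_{s_0,t_0}(X)$ has at least $\frac{p-1}{2}$ distinct roots, and therefore $\deg_{X}\theta_{s_0,t_0}\geq\frac{p-1}{2}$. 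But $a_{\frac{p-1}{2}}(s_0,t_0)=0$ kills the coefficient of $X^{\frac{p-1}{2}}$ in $\theta_{s_0,t_0}$, so $\deg_{X}\theta_{s_0,t_0}\leq\frac{p-3}{2}$ --- a contradiction.

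Then, since $E_{s_0,t_0}$ is supersingular, $E_{s_0,t_0}\left[p\right]\left(\Kalg\right)=\{O\}$, so $\theta_{s_0,t_0}(x^{p})$ has no root in $\Kalg$, and hence neither does $\theta_{s_0,t_0}(X)$. A nonzero polynomial over the algebraically closed field $\Kalg$ with no root is a nonzero constant, so $\deg_{X}\theta_{s_0,t_0}=0$. Comparing this with
\[
\theta_{s_0,t_0}(X)=a_{\frac{p-1}{2}}(s_0,t_0)\,X^{\frac{p-1}{2}}+\Bigl(\sum_{i=1}^{\frac{p-3}{2}}a_{\frac{p-1}{2}+pi}(s_0,t_0)\,X^{\frac{p-1}{2}-i}\Bigr)+a_{\frac{p^{2}-1}{2}}(s_0,t_0)
\]
gives $a_{\frac{p-1}{2}+pi}(s_0,t_0)=0$ for every $1\leq i\leq\frac{p-3}{2}$ and $a_{\frac{p^{2}-1}{2}}(s_0,t_0)\neq 0$, which are exactly the two claims.

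I expect the supersingularity step to be the crux. It pairs the ordinary/supersingular dichotomy --- in particular the count of $\frac{p-1}{2}$ distinct $p$-torsion $x$-coordinates in the ordinary case, cf.\ Lemma~\ref{lem:ssj} and \cite[Ch.III, Corollary~6.4~(c)]{Silverman} --- with the structural fact from Lemma~\ref{lem:poly1} that $\overline{\psi_{p,s,t}(x)}$ equals $\theta_{s,t}(x^{p})$, a $p$-th power substitution, so that the number of \emph{distinct} roots of the specialization $\theta_{s_0,t_0}(x^{p})$ is at most $\deg_{X}\theta_{s_0,t_0}$; it is this that makes the vanishing of the \emph{single} coefficient $a_{\frac{p-1}{2}}$ already incompatible with $E_{s_0,t_0}$ being ordinary. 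The remaining care is in securing $\theta_{s_0,t_0}(X)\neq 0$, which is used in both steps above, and in flagging explicitly that the locus $4s_0^{3}+27t_0^{2}=0$ has to be excluded.
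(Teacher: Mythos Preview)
Your argument is correct and follows essentially the same route as the paper's proof: both deduce from $a_{\frac{p-1}{2}}(s_0,t_0)=0$ that $\deg_X\theta_{s_0,t_0}<\frac{p-1}{2}$, hence $E_{s_0,t_0}$ cannot be ordinary (else $\theta_{s_0,t_0}$ would have $\frac{p-1}{2}$ distinct roots), so $E_{s_0,t_0}$ is supersingular and $\theta_{s_0,t_0}$ is a nonzero constant, from which the two assertions are read off. Your version simply spells out the counting of $p$-torsion $x$-coordinates and the nonvanishing of $\theta_{s_0,t_0}$ more explicitly than the paper does, and your observation that the hypothesis $4s_0^{3}+27t_0^{2}\neq 0$ is tacitly needed (and genuinely fails at $(0,0)$) is a valid refinement of the paper's statement; the paper's own proof already assumes this implicitly by speaking of ``the specialized elliptic curve $E_{s_0,t_0}$'', and all subsequent applications in Corollary~\ref{cor:leading_coeff_is_sep} are at nonsingular points.
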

\begin{proof}
	Since the degree of the polynomial $\theta_{s_{0},t_{0}}\left( X\right)$ specialized  at $(s_0, t_0)$ is less than $\frac{p-1}{2}$, it follows that  $\Big| E_{s_{0},t_{0}}\left[p\right] \Big| < p$. This implies that the specialized elliptic curve $E_{s_{0},t_{0}}$ is supersingular and that $\theta_{s_{0},t_{0}}\left( X\right)$ has no zero in $\Kalg$. In other words, $\theta_{s_{0},t_{0}}\left( X\right)$ is a non-zero constant polynomial. Therefore, $a_{\frac{p-1}{2}+pi}\left(s_{0},t_{0}\right) = 0$ for all $1 \le i \le \frac{p-3}{2}$, and $a_{\frac{p^{2}-1}{2}}\left(s_{0},t_{0}\right) \ne 0$.
\end{proof}

In the proof of Proposition~\ref{prop:leading_coeff_and_other_coeff}, we observe that the coefficient $a_{\frac{p-1}{2}}$ is connected to the supersingular property of elliptic curves. To study this connection, we introduce two values $e_{3}$ and $e_{4}$, which depend on $p$:
\begin{defn}\label{defn:idx}
	We define $$
		e_{3} := \begin{cases}0,&\text{ if } p \equiv 1\pmod{3}, \\ 1,&\text{ if } p \equiv 2\pmod{3},\end{cases} \text{ and } e_{4} := \begin{cases}0,&\text{ if } p \equiv 1\pmod{4}, \\ 1,&\text{ if } p \equiv 3\pmod{4}.\end{cases}
	$$ 
\end{defn} 

Since $a_{\frac{p-1}{2}}$ is weight-homogeneous with weight $\frac{p-1}{2}$,  it follows that $s^{e_{3}}t^{e_{4}}$ divides $a_{\frac{p-1}{2}}$. The quotient $\frac{a_{\frac{p-1}{2}}}{s^{e_{3}}t^{e_{4}}}$ has weight  $6 \left\lfloor \frac{p-1}{12} \right\rfloor$. Therefore, we have a weight-homogeneous polynomial $B\left(U,V\right)$ of weight $\left\lfloor \frac{p-1}{12} \right\rfloor$, where $\wt\left(U\right) = 1$ and $\wt\left(V\right) = 1$ such that \begin{equation}\label{eqn:leading_coeff}
	a_{\frac{p-1}{2}}\left(s,t\right) = s^{e_{3}}t^{e_{4}} B\left(s^{3},t^{2}\right).    
\end{equation}

Next, we determine the form of $B\left(U,V\right)$, noting that $B$ is clearly defined over $\Fp$.

\begin{prop}\label{prop:leading_coeff_and_ssj}
	$B\left(U,V\right) = C F_{\ss}\left(U, \frac{U+27V/4}{1728}\right)$ for some non-zero constant $C\in \Fp$, where $F_{\ss}\left(J,Z\right)$ is the homogenization of $f_{\ss}\left(j\right)$ from \eqref{fss} with respect to $j=J/Z$.
\end{prop}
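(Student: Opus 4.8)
\emph{Strategy.} The plan is to reduce the claim to the assertion that $B(U,V)$ is a non-zero scalar multiple of the homogeneous polynomial
$$
G(U,V):=\prod_{\substack{j_{\ss}\in\Jss\\ j_{\ss}\neq 0,1728}}\bigl((6912-4j_{\ss})\,U-27\,j_{\ss}\,V\bigr),
$$
and then to identify the zero loci of both sides using the fact that the leading coefficient $a_{\frac{p-1}{2}}$ of $\theta_{s,t}$ detects supersingularity. Throughout put $d:=\bigl\lfloor\tfrac{p-1}{12}\bigr\rfloor$.

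\emph{Step 1: reduction to a product formula.} First I would record that the $j$-invariant of $E_{s,t}$ is $j=\dfrac{6912\,s^{3}}{4s^{3}+27t^{2}}$, with $6912=4\cdot 1728$, and that $4,27,1728,6912$ are units in $\Fp$ for $p\ge5$. Since $F_{\ss}(J,Z)=\prod_{j_{\ss}\neq 0,1728}(J-j_{\ss}Z)$, a direct computation substituting $J=U$ and $Z=(U+27V/4)/1728$ and clearing denominators gives $6912^{d}\,F_{\ss}\!\bigl(U,\tfrac{U+27V/4}{1728}\bigr)=G(U,V)$. By Lemma~\ref{lem:ssj}(c), $G$ is a product of $d$ linear forms; each factor equals $4(1728-j_{\ss})U-27j_{\ss}V$ with $1728-j_{\ss}\neq0$ (as $j_{\ss}\neq1728$), so it is genuinely linear, the $d$ projective zeros $[U:V]=[27j_{\ss}:6912-4j_{\ss}]$ are pairwise distinct (the assignment $j_{\ss}\mapsto 27j_{\ss}/(6912-4j_{\ss})$ is injective), and none of them is $[1:0]$ or $[0:1]$. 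Since $f_{\ss}\in\Fp[j]$ (Lemma~\ref{lem:fss}) and the substitution has coefficients in $\Fp$, we have $G\in\Fp[U,V]$; moreover $B\in\Fp[U,V]$ is homogeneous of degree $d$ by~\eqref{eqn:leading_coeff} and Definition~\ref{defn:idx}, and $B\neq0$ because $a_{\frac{p-1}{2}}$ is the leading coefficient of $\theta_{s,t}$ (Lemma~\ref{lem:poly1}) and is not the zero polynomial (by Proposition~\ref{prop:leading_coeff_and_other_coeff} applied to any ordinary specialization). Hence it suffices to prove $B=C'G$ for some $C'\in\Fp^{\times}$, whereupon $C:=6912^{d}C'$ works.

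\emph{Step 2: the zero locus of $a_{\frac{p-1}{2}}$.} Next I would prove that for every $(s_{0},t_{0})\in(\Kalg)^{2}$ with $4s_{0}^{3}+27t_{0}^{2}\neq0$,
$$
a_{\frac{p-1}{2}}(s_{0},t_{0})=0\iff E_{s_{0},t_{0}}\text{ is supersingular}\iff j(s_{0},t_{0})\in\Jss .
$$
The last equivalence is the definition of $\Jss$. For ``$\Leftarrow$'' of the first: the $p$-th division polynomial is produced by a polynomial recursion over $\Fp[s,t]$, so its specialization at $(s_{0},t_{0})$ is the $p$-th division polynomial of the smooth curve $E_{s_{0},t_{0}}$, whose roots are the $x$-coordinates of the non-zero $p$-torsion points; if $E_{s_{0},t_{0}}$ is supersingular there are none, so $\theta_{s_{0},t_{0}}(x^{p})=\overline{\psi_{p,s_{0},t_{0}}}(x)$ has no root in the algebraically closed field $\Kalg$, forcing $\theta_{s_{0},t_{0}}$ to be a constant polynomial and hence $a_{\frac{p-1}{2}}(s_{0},t_{0})=0$. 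The converse ``$\Rightarrow$'' is precisely Proposition~\ref{prop:leading_coeff_and_other_coeff}.

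\emph{Step 3: matching $B$ with $G$, and the main obstacle.} Finally I would transfer Step~2 to $B$ via $a_{\frac{p-1}{2}}(s,t)=s^{e_{3}}t^{e_{4}}B(s^{3},t^{2})$. Given $[U_{0}:V_{0}]\in\P^{1}(\Kalg)$ with $U_{0}\neq0$, $V_{0}\neq0$, and $4U_{0}+27V_{0}\neq0$, pick $s_{0},t_{0}\neq0$ with $s_{0}^{3}=U_{0}$, $t_{0}^{2}=V_{0}$; then $B(U_{0},V_{0})=0$ iff $a_{\frac{p-1}{2}}(s_{0},t_{0})=0$ iff $j_{0}:=6912U_{0}/(4U_{0}+27V_{0})\in\Jss$, and such a $j_{0}$ is automatically neither $0$ (as $U_{0}\neq0$) nor $1728$ (as $V_{0}\neq0$). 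Conversely each $j_{\ss}\in\Jss\setminus\{0,1728\}$ is realized by $[U_{0}:V_{0}]=[27j_{\ss}:6912-4j_{\ss}]$, which satisfies $U_{0},V_{0}\neq0$ and $4U_{0}+27V_{0}=27\cdot6912\neq0$. Thus $B$ vanishes at exactly the $d$ distinct points $[27j_{\ss}:6912-4j_{\ss}]$ among all $[U_{0}:V_{0}]$ with $U_{0}V_{0}(4U_{0}+27V_{0})\neq0$; since $\deg B=d$ and $B$ already has $d$ distinct zeros, it has no others and every zero is simple. Hence $B$ and $G$ are degree-$d$ homogeneous polynomials with the same simple zeros, so $B=C'G$ for some $C'\in\Kalg^{\times}$, and as $B,G\in\Fp[U,V]$ we get $C'\in\Fp^{\times}$, finishing the proof. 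The essential point — and the one to get right — is the identification in Step~2 of the vanishing of $a_{\frac{p-1}{2}}$ with supersingularity (one direction being already Proposition~\ref{prop:leading_coeff_and_other_coeff}), together with the careful bookkeeping of the special loci $s=0$ and $t=0$, i.e.\ $j=0$ and $j=1728$: this is exactly what the exponents $e_{3},e_{4}$ and Lemma~\ref{lem:ssj}(a),(b) handle, and once the zero loci are matched the degree count closes the argument with no further computation.
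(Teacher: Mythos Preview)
Your proof is correct and follows essentially the same approach as the paper: both identify the zeros of $B$ with the supersingular $j$-invariants in $\Jss\setminus\{0,1728\}$ via the implication ``$E_{s_0,t_0}$ supersingular $\Rightarrow a_{(p-1)/2}(s_0,t_0)=0$'' (the converse being Proposition~\ref{prop:leading_coeff_and_other_coeff}), and then finish with the degree match $\deg B=\lfloor(p-1)/12\rfloor=\deg f_{\ss}$. The only cosmetic difference is that the paper dehomogenizes to a single variable $u=U/V$, specializes at $(s,t)=(u_0,u_0)$, and uses separability of $f_{\ss}$ (Lemma~\ref{lem:fss}) to obtain a divisibility $g\mid b$ rather than counting projective zeros directly as you do.
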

\begin{proof}
	We let $b\left(u\right)$ be the dehomogenization of $B\left(U,V\right)$ where $u = U/V$. It is sufficient to show that two polynomials $b\left(u\right)$ and $g\left(u\right): = F_{\ss}\left(u, \frac{u+27/4}{1728}\right)$ in $\Fp\left[u\right]$ are equal up to a non-zero scalar multiple. By Lemma~\ref{lem:fss} and the definition \eqref{fss}, we have $\deg b \le \left\lfloor \frac{p-1}{12}\right\rfloor = \deg g$. If $g \mid b$ in $\Fp\left[u\right]$, then we conclude that $b$ is the product of $g$ and a non-zero constant. So we show that $g$ divides $b$. By Lemma~\ref{lem:fss}, $f_{\ss}$ and $g$ are separable, and therefore, if all the zeros of $g$ are also the zeros of $b$, then $g$ divides $b$. Thus, it is enough to show that all zeros of $g$ are the zeros of $b$.


	For any zero $u_{0} \in \Kalg$ of $g$, we have $j_{\ss} := \frac{1728u_{0}}{u_{0}+27/4} \in \Jss \setminus \left\{0,1728\right\}$. This implies that $u_{0} \ne 0$. The $j$-invariant of the specialized elliptic curve $E_{u_{0},u_{0}}$ is precisely $j_{\ss}$, and since this curve is supersingular, the polynomial $\theta_{u_{0},u_{0}}$ must be a non-zero constant. Therefore, we conclude that $B\left(u_{0}^{3},u_{0}^{2}\right) = 0$, so $b\left(u_{0}\right)=0$.
\end{proof}
Proposition~\ref{prop:leading_coeff_and_ssj} leads to the following useful corollary.

\begin{cor}\label{cor:leading_coeff_is_sep}
	Consider the following weight-homogeneous polynomial coefficients  $a_i \in K\left[s,t\right]$ of the polynomial $\theta_{s,t}\left(X\right)$ obtained in Lemma~\ref{lem:poly1}.
 Then, the following holds: \begin{enumerate}[\normalfont (a)]
		\item For any prime divisor $Q$ of $a_{\frac{p-1}{2}}$, we have $Q^{2} \nmid a_{\frac{p-1}{2}}$.
		\item $a_{\frac{p-1}{2}} $ divides $ a_{\frac{p-1}{2}+pi}$, for $1 \le i \le \frac{p-3}{2}$.
		\item The polynomials $a_{\frac{p-1}{2}}$ and $a_{\frac{p^{2}-1}{2}}$ are relatively prime.
	\end{enumerate}
\end{cor}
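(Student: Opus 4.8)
The plan is to read off all three parts from the explicit factorization of $a_{\frac{p-1}{2}}$ provided by \eqref{eqn:leading_coeff} and Proposition~\ref{prop:leading_coeff_and_ssj}, combined with Proposition~\ref{prop:leading_coeff_and_other_coeff}. Since squarefreeness, divisibility, and coprimality of polynomials are unchanged by enlarging the ground field and all the $a_k$ lie in $\Fp[s,t]$, I will work over $\Kalg[s,t]$ throughout; the conclusions then descend to $K[s,t]$.

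The first and main step is (a). Writing $f_{\ss}(j)=\prod_{k=1}^{d}(j-\beta_k)$ with $d=\lfloor(p-1)/12\rfloor$, Lemma~\ref{lem:fss} gives that the $\beta_k$ are pairwise distinct, and by the definition \eqref{fss} none of them is $0$ or $1728$. Substituting into \eqref{eqn:leading_coeff} and Proposition~\ref{prop:leading_coeff_and_ssj} and simplifying, one obtains a nonzero constant $c$ with
\[
a_{\frac{p-1}{2}}(s,t)=c\,s^{e_3}t^{e_4}\prod_{k=1}^{d}\bigl(s^{3}-r_k t^{2}\bigr),\qquad r_k:=\frac{27\beta_k}{4(1728-\beta_k)} .
\]
Here I would verify: the $r_k$ are well-defined and nonzero (using $\beta_k\ne 1728$ and $\beta_k\ne 0$) and pairwise distinct (the fractional-linear map $\beta\mapsto 27\beta/(4(1728-\beta))$ is injective on $\{\beta\ne 1728\}$); each $s^{3}-r_k t^{2}$ is irreducible in $\Kalg[s,t]$ (it is primitive as a polynomial in $s$ over $\Kalg[t]$, and as a cubic in $s$ over $\Kalg(t)$ it has no root, since $r_k t^{2}$ is not a cube in $\Kalg(t)$ because $\gcd(2,3)=1$ and $\Kalg$ is algebraically closed, so Gauss's lemma applies); and the polynomials $s$, $t$, $s^{3}-r_1t^{2},\dots,s^{3}-r_dt^{2}$ are pairwise non-associate, the only case needing attention being two distinct cubics, whose difference is $(r_\ell-r_k)t^{2}$, divisible by neither. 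Since also $e_3,e_4\le 1$, this exhibits $a_{\frac{p-1}{2}}$ as a product of pairwise coprime irreducibles each to the first power, i.e.\ squarefree, which is (a). I also record that $a_{\frac{p-1}{2}}$, being weight-homogeneous of positive weight, is nonconstant, so its zero set in $(\Kalg)^{2}$ is nonempty.

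Parts (b) and (c) are then formal. For (b), we may assume $a_{\frac{p-1}{2}+pi}\ne 0$; Proposition~\ref{prop:leading_coeff_and_other_coeff} says precisely that the zero set of $a_{\frac{p-1}{2}}$ is contained in that of $a_{\frac{p-1}{2}+pi}$, so for any irreducible factor $Q$ of $a_{\frac{p-1}{2}}$ the zero set $V(Q)$ of $Q$, an irreducible plane curve, lies in the union of the curves cut out by the irreducible factors of $a_{\frac{p-1}{2}+pi}$; by irreducibility it therefore coincides with one of them, whence $Q\mid a_{\frac{p-1}{2}+pi}$. Since by (a) $a_{\frac{p-1}{2}}$ is the product of its distinct, pairwise coprime irreducible factors, each dividing $a_{\frac{p-1}{2}+pi}$, we get $a_{\frac{p-1}{2}}\mid a_{\frac{p-1}{2}+pi}$. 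For (c), a common irreducible factor $Q$ of $a_{\frac{p-1}{2}}$ and $a_{\frac{p^{2}-1}{2}}$ would furnish, via any point of the nonempty curve $V(Q)$, a common zero of these two polynomials, contradicting the last assertion of Proposition~\ref{prop:leading_coeff_and_other_coeff} that no zero of $a_{\frac{p-1}{2}}$ is a zero of $a_{\frac{p^{2}-1}{2}}$; hence they are coprime.

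The step I expect to require the most care is the bookkeeping inside (a): one must be sure that the substitution turning $F_{\ss}$ into a scalar multiple of $B(U,V)$ neither drops the top degree below $d$ nor introduces a spurious factor of the homogenizing variable — equivalently $f_{\ss}(1728)\ne 0$ and $f_{\ss}(0)\ne 0$, which hold exactly because $1728$ and $0$ are excluded from \eqref{fss} — and then that the separability of the \emph{univariate} $f_{\ss}$ genuinely forces squarefreeness of the \emph{bivariate} $a_{\frac{p-1}{2}}$, which is where the irreducibility of $s^{3}-r_k t^{2}$ and the pairwise coprimality checks are used. Everything after (a) is immediate from Proposition~\ref{prop:leading_coeff_and_other_coeff}.
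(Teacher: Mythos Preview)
Your proof is correct. For part (a) you and the paper follow the same line—both extract squarefreeness of $a_{\frac{p-1}{2}}$ from the separability of $f_{\ss}$ via the factorization \eqref{eqn:leading_coeff} and Proposition~\ref{prop:leading_coeff_and_ssj}—though you push further by writing out the irreducible factors $s^{3}-r_{k}t^{2}$ explicitly over $\Kalg$, whereas the paper argues case by case ($Q=s$, $Q=t$, $Q\mid B(s^{3},t^{2})$) and reduces to the dehomogenized one-variable polynomial $b(u)$. For parts (b) and (c) the routes genuinely diverge: you invoke a Nullstellensatz-style argument over $\Kalg$ (containment of zero sets $V(Q)\subseteq V(a_{\frac{p-1}{2}+pi})$ forces $Q\mid a_{\frac{p-1}{2}+pi}$ since $(Q)$ is prime), while the paper stays over $K$ and, for each prime $Q$, picks a specific specialization point—$(0,t)$, $(s,0)$, or $(u_{0},u_{0})$ for a root $u_{0}$ of $q_{0}$—and then uses weight-homogeneity to pass from vanishing at that point to divisibility of the corresponding one-variable dehomogenizations. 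Your approach is shorter and more conceptual but imports the Nullstellensatz and a base-change step; the paper's is more elementary and self-contained but requires tracking the weight decomposition carefully. One small point: in your argument for (c), ``any point of $V(Q)$'' should really be a point where $4s_{0}^{3}+27t_{0}^{2}\ne 0$ (the proof of Proposition~\ref{prop:leading_coeff_and_other_coeff} tacitly uses that $E_{s_{0},t_{0}}$ is an elliptic curve, and indeed both $a_{\frac{p-1}{2}}$ and $a_{\frac{p^{2}-1}{2}}$ vanish at the origin); since none of your irreducible factors $s$, $t$, $s^{3}-r_{k}t^{2}$ divides $4s^{3}+27t^{2}$, such points are Zariski-dense on $V(Q)$, so the argument goes through unchanged.
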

\begin{proof}
	Before proceeding with the proof, we note two properties of the prime divisor $Q\left(s,t\right)$ of $a_{\frac{p-1}{2}}$.  First, $Q\left(s,t\right)$ either divides $B\left(s^{3},t^{2}\right)$, or is equal to $s$ or $t$, since $a_{\frac{p-1}{2}}\left(s,t\right) =s^{e_{3}}t^{e_{4}}B\left(s^{3},t^{2}\right)$, referring to \eqref{eqn:leading_coeff}. Second,  if $Q\left(s,t\right) \ne s,t$, then $Q\left(s,t\right) = Q_{0}\left(s^{3},t^{2}\right)$ for some weight-homogeneous polynomial $Q_{0}\left(U,V\right)$, where the weights are assigned such that $\wt\left(U\right) = \wt\left(V\right) = 1$. For the case $Q\left(s,t\right) \ne s,t$, let $q_{0}\left(u\right)$ and $b\left(u\right)$ be the dehomogenizations of $Q_{0}\left(U,V\right)$ and  $B\left(U,V\right)$, respectively, with respect to $u = U/V$.
 
	To prove (a), if $Q\left(s,t\right) = s$, then $s$ divides $B\left(s^{3},t^{2}\right)$, so $s^{3}$ would divide $B\left(s^{3},t^{2}\right)$ and this would imply $$
		U ~~\bigg|~~ B\left(U,V\right) = CF_{\ss}\left(U, \frac{U+27V/4}{1728}\right) = C\prod_{\substack{j_{\ss}\in \Jss \\ j_{\ss} \ne 0,1728}} \left(U-j_{\ss}\frac{U+27V/4}{1728}\right)
	$$ by Proposition~\ref{prop:leading_coeff_and_ssj}, which is impossible. A similar contradiction arises if $Q\left(s,t\right) = t$. Now, we have that $Q\left(s,t\right)$ divides $B\left(s^{3},t^{2}\right)$. Then, this implies that $Q^{2}\left(s,t\right) \mid B\left(s,t\right)$, since $Q\left(s,t\right)=Q_{0}\left(s^{3},t^{2}\right)$, and that $q_{0}^{2}\left(u\right) \mid b\left(u\right)$. However, this is a contradiction, since we know that $b\left(u\right)$ is separable by Proposition~\ref{prop:leading_coeff_and_ssj} and Lemma~\ref{lem:fss}.

	To prove parts (b) and (c),  it is enough to show that any prime divisor $Q\left(s,t\right)$ of $a_{\frac{p-1}{2}}\left(s,t\right)$ divides $a_{\frac{p-1}{2}+pi}\left(s,t\right)$ for each $1\le i \le \frac{p-3}{2}$, but not $a_{\frac{p^{2}-1}{2}}$, by considering (a). If $Q\left(s,t\right) = s$, then $a_{\frac{p-1}{2}}\left(0,t\right) = 0$. By Proposition~\ref{prop:leading_coeff_and_other_coeff}, it follows that $a_{\frac{p-1}{2}+pi}\left(0,t\right) = 0$ while  $a_{\frac{p^{2}-1}{2}}\left(0,t\right) \ne 0$. These occur only when $s \mid a_{\frac{p-1}{2}+pi}\left(s,t\right)$ and $s \nmid a_{\frac{p^{2}-1}{2}}\left(s,t\right)$. Similarly, if $Q\left(s,t\right) = t$, then $t \mid a_{\frac{p-1}{2}+pi}\left(s,t\right)$ and $t \nmid a_{\frac{p^{2}-1}{2}}\left(s,t\right)$.	For a $Q\left(s,t\right)$ dividing $B\left(s^{3},t^{2}\right)$, we note that $q_{0}\left(u\right)$ is irreducible over $\Kst$. Let $u_{0} \in \Kalg$ be a zero of $q_{0}\left(u\right)$. If $u_{0}=0$, then $q_{0}\left(u\right) = u$, implying that  $U = Q_{0}\left(U,V\right)$ divides $ B\left(U,V\right)$, which is a contradiction as we have already shown. Therefore, $u_{0} \ne 0$. Then, by considering the weights of $a_{\frac{p-1}{2}+pi}\left(s,t\right)$ and $a_{\frac{p^{2}-1}{2}}\left(s,t\right)$, we deduce that for some integers $0\le e\le 2$ and $0 \le e' \le 1$, the polynomials $\frac{a_{\frac{p-1}{2}+pi}\left(s,t\right)}{s^{e}t^{e'}}$ and $a_{\frac{p^{2}-1}{2}}\left(s,t\right)$ are weight-homogeneous of weights divisible by $6$. Hence, for some non-negative integer $e''$, the polynomials $\frac{a_{\frac{p-1}{2}+pi}\left(s,t\right)}{s^{e}t^{e'+2e''}}$ and $\frac{a_{\frac{p^{2}-1}{2}}\left(s,t\right)}{t^{\frac{p^{2}-1}{4}}}$ are weight-homogeneous in variable $s^{3}/t^{2}$ of weight $0$. We denote these by $g\left(s^{3}/t^{2}\right)$ and $h\left(s^{3}/t^{2}\right)$, respectively. Since $B\left(u_{0},u_{0}\right) = 0$, we have $a_{\frac{p-1}{2}}\left(u_{0},u_{0}\right) = 0$ from \eqref{eqn:leading_coeff}. By Proposition~\ref{prop:leading_coeff_and_other_coeff}, it follows that  $u_{0}^{e+e'+2e''} g\left(u_{0}\right) = a_{\frac{p-1}{2}+pi}\left(u_{0},u_{0}\right) = 0$ and $u_{0}^{\frac{p^{2}-1}{4}} h\left(u_{0}\right) = a_{\frac{p^{2}-1}{2}}\left(u_{0},u_{0}\right) \ne 0$. Since $u_{0} \ne 0$, we conclude that  $g\left(u_{0}\right) = 0$ and $h\left(u_{0}\right) \ne 0$. As $q_{0}\left(u\right)$ is irreducible, $q_{0}\left(u\right)$ divides $g\left(u\right)$ but not $h\left(u\right)$. Therefore, $Q\left(s,t\right)$ divides $a_{\frac{p-1}{2}+pi}\left(s,t\right)$ but not $a_{\frac{p^{2}-1}{2}}\left(s,t\right)$.
\end{proof}

We will need the following lemma  to show the irreducibility of $\theta_{s,t}\left(X\right)$ over $\Kst$.

\begin{lemma}\label{lem:Eisenstein-Gauss}
	Let $D$ be a Dedekind domain or a UFD, $P\subseteq D$ be a prime ideal, and $D_{P}$ be the localization of $D$ at $P$. If the polynomial $f\left(x\right) := \sum_{i=0}^{n} r_{i} x^{n-i}\in D_{P}[x]$ satisfy the conditions, $$
		r_{0} \notin \left(PD_{P}\right)^{2},\quad  r_{i} \in PD_{P} \text{ for all } 0 \le i\le n-1, \text{ and }  r_{n} \notin PD_{P},
	$$ then, $f$ is irreducible over the field of fraction of $D$.
\end{lemma}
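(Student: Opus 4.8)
The plan is to recognize the hypothesis as \emph{Eisenstein's criterion applied to the reciprocal polynomial $x^{n}f(1/x)=\sum_{i=0}^{n}r_{i}x^{i}$}, carried out over the local ring $D_{P}$. First I would put everything over $D_{P}$: when $D$ is Dedekind and $P\neq(0)$, the localization $D_{P}$ is a DVR, hence a PID and in particular a UFD; when $D$ is a UFD, $D_{P}$ is again a UFD, since localizations of UFDs are UFDs; and the degenerate case $P=(0)$ does not occur under the hypotheses once $\deg f\ge 1$. In every case $D_{P}$ is a UFD whose field of fractions is $\operatorname{Frac}(D)$, its unique maximal ideal is $\mathfrak{m}:=PD_{P}$, and $k:=D_{P}/\mathfrak{m}$ is a field. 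Moreover, since $r_{n}\notin\mathfrak{m}$, the coefficient $r_{n}$ is a unit of $D_{P}$, so $f$ is a primitive polynomial in $D_{P}[x]$; note also $\deg f=n$ exactly, because $r_{0}\notin\mathfrak{m}^{2}$ forces $r_{0}\neq 0$.

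Next, suppose toward a contradiction that $f=gh$ with $g,h\in\operatorname{Frac}(D)[x]$ of degrees $\ge 1$. By Gauss's lemma over the UFD $D_{P}$, since $f$ is primitive I may rescale $g$ and $h$ so that $g,h\in D_{P}[x]$, still with $\deg g,\deg h\ge 1$ and $\deg g+\deg h=n$. Now reduce modulo $\mathfrak{m}$. The coefficient of $x^{\ell}$ in $f$ is $r_{n-\ell}$, which lies in $\mathfrak{m}$ whenever $1\le\ell\le n$, while the constant term of $f$ is $r_{n}\notin\mathfrak{m}$; hence the image $\bar f\in k[x]$ equals the nonzero constant $\bar r_{n}$. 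Since $k[x]$ is a domain, $\bar g\,\bar h=\bar f=\bar r_{n}$ being a nonzero constant forces $\deg\bar g=\deg\bar h=0$, i.e.\ every non-constant coefficient of $g$ and of $h$ lies in $\mathfrak{m}$.

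Finally, I would compare leading coefficients: $r_{0}$ is the leading coefficient of $f$, hence the product of the leading coefficients of $g$ and of $h$. Because $\deg g\ge 1$ and $\deg h\ge 1$, both of those leading coefficients are non-constant coefficients, so by the previous step each lies in $\mathfrak{m}$; therefore $r_{0}\in\mathfrak{m}^{2}=(PD_{P})^{2}$, contradicting the hypothesis $r_{0}\notin(PD_{P})^{2}$. Hence no such factorization exists and $f$ is irreducible over $\operatorname{Frac}(D)$.

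There is no genuinely hard step here — the argument is the classical Eisenstein/Gauss proof. The points that require care are purely bookkeeping: observing that it is the \emph{leading} coefficient $r_{0}$, not the constant term, that plays the role of the Eisenstein element (this lemma is exactly Eisenstein for $x^{n}f(1/x)$); checking that $f$ is primitive in $D_{P}[x]$ so that Gauss's lemma applies; and recording that in both the Dedekind case and the UFD case the localization $D_{P}$ is a UFD with fraction field $\operatorname{Frac}(D)$, which is what makes the single argument cover both hypotheses.
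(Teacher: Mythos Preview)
Your proposal is correct and takes essentially the same approach as the paper: both recognize the hypothesis as Eisenstein's criterion for the reciprocal polynomial $\sum_{i=0}^{n} r_{i} x^{i}$ over the UFD $D_{P}$, followed by Gauss's lemma to pass to $\operatorname{Frac}(D)=\operatorname{Frac}(D_{P})$. The only difference is cosmetic: the paper invokes Eisenstein and Gauss as black boxes in two lines, while you unfold the standard proof of Eisenstein (reduce mod $\mathfrak{m}$, force both factors to have leading coefficients in $\mathfrak{m}$, contradict $r_{0}\notin\mathfrak{m}^{2}$).
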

\begin{proof}
	First, by  Eisenstein's criterion,  the polynomial $\sum_{i=0}^{n} r_{i} x^{i}$ is irreducible over $D_{P}$, and thus, so is $f$. Since $D_{P}$ is a UFD,  $f$ is irreducible over the field of fractions of $D_{P}$, by  Gauss's lemma(\cite[\S~9.Proposition~5]{Dummit-Foote}). As the field of fractions of $D$ is the same as that of $D_{P}$, the proof is completed.
\end{proof}

\begin{prop}\label{prop:theta_irr}
	The polynomial $\theta_{s,t}\left(X\right)$ obtained in Lemma~\ref{lem:poly1} is irreducible over $\Kst$.
\end{prop}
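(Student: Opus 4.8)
The plan is to recognize $\theta_{s,t}\left(X\right)$ as a polynomial of the reversed Eisenstein shape treated in Lemma~\ref{lem:Eisenstein-Gauss}, and to supply the required prime and divisibility data from Corollary~\ref{cor:leading_coeff_is_sep}. First I would rewrite the polynomial of Lemma~\ref{lem:poly1} as $\theta_{s,t}\left(X\right) = \sum_{i=0}^{n} r_{i} X^{n-i}$ with $n := \frac{p-1}{2}$; comparing coefficients, $r_{i} = a_{\frac{p-1}{2}+pi}$ for every $0 \le i \le n$, where the two endpoints are the leading coefficient $r_{0} = a_{\frac{p-1}{2}}$ and the constant term $r_{n} = a_{\frac{p^{2}-1}{2}}$ (using $\frac{p-1}{2} + p\cdot\frac{p-1}{2} = \frac{p^{2}-1}{2}$). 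Thus $\theta_{s,t}$ is exactly of the form $\sum_{i=0}^{n} r_{i} x^{n-i}$ to which Lemma~\ref{lem:Eisenstein-Gauss} applies.

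Next I would produce the prime. Since $\theta_{s,t}$ has degree $\frac{p-1}{2} > 0$ in $X$ and, by Lemma~\ref{lem:poly1}, its leading coefficient $a_{\frac{p-1}{2}}$ is a nonzero polynomial that is weight-homogeneous of weight $\frac{p-1}{2} > 0$, the polynomial $a_{\frac{p-1}{2}} \in K[s,t]$ is non-constant and so admits an irreducible factor $Q = Q\left(s,t\right)$. I take $D := K[s,t]$, a UFD, and $P := QD$, with localization $D_{P}$. Then Corollary~\ref{cor:leading_coeff_is_sep} yields precisely the three hypotheses of Lemma~\ref{lem:Eisenstein-Gauss}: part~(a) (squarefreeness of $a_{\frac{p-1}{2}}$) gives $r_{0} = a_{\frac{p-1}{2}} \in PD_{P}\setminus\left(PD_{P}\right)^{2}$; part~(b) gives $a_{\frac{p-1}{2}} \mid a_{\frac{p-1}{2}+pi}$ for $1 \le i \le \frac{p-3}{2} = n-1$, whence $r_{i} \in PD_{P}$ for all $0 \le i \le n-1$; and part~(c) (coprimality of $a_{\frac{p-1}{2}}$ and $a_{\frac{p^{2}-1}{2}}$) gives $Q \nmid a_{\frac{p^{2}-1}{2}}$, i.e.\ $r_{n} \notin PD_{P}$. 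Invoking Lemma~\ref{lem:Eisenstein-Gauss} then shows $\theta_{s,t}\left(X\right)$ is irreducible over $\operatorname{Frac}\left(D\right) = \Kst$, which is the claim.

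I do not expect a real obstacle here: the substance has all been moved into Corollary~\ref{cor:leading_coeff_is_sep} (and, behind it, the identification of $a_{\frac{p-1}{2}}$ with the supersingular polynomial in Proposition~\ref{prop:leading_coeff_and_ssj}), while Lemma~\ref{lem:Eisenstein-Gauss} is tailored for exactly this application. The only things needing a moment's attention are bookkeeping: matching the indices of the $a_{k}$ with the $r_{i}$ of Lemma~\ref{lem:Eisenstein-Gauss}, and confirming that $a_{\frac{p-1}{2}}$ is genuinely non-constant so that a prime divisor $Q$ exists --- both immediate from weight-homogeneity and the formula $\deg_{X}\theta_{s,t} = \frac{p-1}{2}$.
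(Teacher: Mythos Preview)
Your proposal is correct and follows essentially the same approach as the paper: both pick a prime divisor $Q$ of the non-constant leading coefficient $a_{\frac{p-1}{2}}$ in the UFD $K[s,t]$, read off the three divisibility conditions from Corollary~\ref{cor:leading_coeff_is_sep}, and then invoke Lemma~\ref{lem:Eisenstein-Gauss}. Your write-up is simply more explicit about the index matching $r_{i}=a_{\frac{p-1}{2}+pi}$, which the paper leaves implicit.
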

\begin{proof}
	Since $a_{\frac{p-1}{2}}$ is not constant (as  its weight is $\frac{p-1}{2}$), there exists a prime polynomial $Q \in K\left[s,t\right]$ that divides $a_{\frac{p-1}{2}}$. By Corollary~\ref{cor:leading_coeff_is_sep}, we have $Q \mid  a_{\frac{p-1}{2}+pi}$ for all $1\le i\le \frac{p-3}{2}$, $Q ~\|~ a_{\frac{p-1}{2}}$, and $Q \nmid a_{\frac{p^{2}-1}{2}}$. Thus, since $K\left[s,t\right]$ is a UFD, Lemma~\ref{lem:Eisenstein-Gauss} completes the proof.
\end{proof}

\begin{cor}\label{cor:first_extn}
    The field extension $\Kst\left(x\left(E_{s,t}\left[p\right]\right)\right) $ over $ \Kst$ has separable degree~$\frac{p-1}{2}$ and inseparable degree~$p$.
\end{cor}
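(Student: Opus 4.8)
The plan is to compute the two degrees separately and then combine them. Recall from Lemma~\ref{lem:poly1} that $\overline{\psi_{p,s,t}(x)} = \theta_{s,t}(x^p)$, where $\theta_{s,t}(X)$ has degree $\frac{p-1}{2}$ in $X$, and that the zeros of $\overline{\psi_{p,s,t}(x)}$ are exactly the $x$-coordinates of the nonzero $p$-torsion points of $E_{s,t}$. Since $E_{s,t}$ is ordinary (its $j$-invariant $\frac{1728 \cdot 4s^3}{4s^3+27t^2}$ is transcendental over $\Fp$, hence not supersingular by Remark~\ref{jss}), the group $E_{s,t}[p]$ is cyclic of order $p$, so there are exactly $\frac{p-1}{2}$ distinct $x$-coordinates among the nonzero $p$-torsion points, namely $x([a]P_1)$ for $\overline{a} \in \Fpx/\{\pm 1\}$. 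Thus $\Kst(x(E_{s,t}[p]))$ is the splitting field of $\overline{\psi_{p,s,t}(x)}$ over $\Kst$, and equivalently — since $x^p = X$ is a bijection on $\Kalg$ — it is obtained from $\Kst$ by first adjoining a root $X_0$ of $\theta_{s,t}(X)$ (this already splits $\theta_{s,t}$ completely: the other roots are $(x([a]P_1))^p$ and all lie in $\Kst(X_0)$ because the $x([a]P_1)$ are rational functions of $x(P_1)$) and then adjoining the $p$-th root $x(P_1)$ of $X_0$.

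First I would establish the separable degree. By Proposition~\ref{prop:theta_irr}, $\theta_{s,t}(X)$ is irreducible over $\Kst$, and by Corollary~\ref{cor:leading_coeff_is_sep}(a) together with Lemma~\ref{lem:poly1} it is separable: indeed its discriminant is nonzero because the zeros $(x([a]P_1))^p$ are distinct (distinct $x$-coordinates, $p$-th power is injective). Hence $\Kst(X_0)/\Kst$ is a separable extension of degree $\frac{p-1}{2}$, and it is already the full splitting field of $\theta_{s,t}$. Therefore the separable closure of $\Kst$ inside $\Kst(x(E_{s,t}[p]))$ is $\Kst(X_0) = \Kst(x(E_{s,t}[p])^p)$, which has degree $\frac{p-1}{2}$ over $\Kst$; this is the separable degree of the whole extension.

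Next I would establish the inseparable degree, i.e. $[\Kst(x(E_{s,t}[p])) : \Kst(X_0)]$, which I claim is $p$. Writing $L = \Kst(X_0)$, we have $\Kst(x(E_{s,t}[p])) = L(x(P_1))$ with $x(P_1)^p = X_0$. So I must show $X_0$ is not a $p$-th power in $L$, equivalently that $T^p - X_0$ is irreducible over $L$; since $L$ has characteristic $p$, this holds iff $X_0 \notin L^p$. To see this, note $L^p = (\Kst)^p(X_0^p)$ and $(\Kst)^p = K^p(s^p, t^p)$; an element of $L^p$, viewed inside $\Kalg$, has all its "$s$-exponents and $t$-exponents" divisible by $p$ in an appropriate sense. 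More concretely: suppose $X_0 = z^p$ for some $z \in L$. Then $z$ would be a zero of $\theta_{s,t}(T^p) = \overline{\psi_{p,s,t}(T)}$, hence $z = x([a]P_1)$ for some $a$, so $x(P_1) \in L$ and thus $\Kst(x(E_{s,t}[p])) = L$ is separable over $\Kst$ of degree $\frac{p-1}{2}$. But this forces $E_{s,t}[p](\Kst^{\mathrm{sep}})$-coordinates to generate a separable extension, which contradicts the well-known fact that the $p$-torsion field of an ordinary elliptic curve is inseparable over the base (the $x$-coordinate of a $p$-torsion point generates a purely inseparable subextension of degree $p$). The cleanest way to get the contradiction without invoking external facts: a root $x(P_1)$ of $\overline{\psi_{p,s,t}}$ satisfies $x(P_1)^p = X_0 \in L$ but $x(P_1) \notin L$ — and if instead $x(P_1) \in L = \Kst(X_0)$, then comparing weights and using that $X_0$ has weight $p$ over $\Kst$ while $x(P_1)$ must have weight $1$, together with the structure of $L$ as a degree-$\frac{p-1}{2}$ extension, yields a contradiction. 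I expect this final non-$p$-th-power verification to be the main obstacle, since it requires pinning down $L^p$ precisely inside $\Kalg$; the weight grading (with $\wt(s)=2,\wt(t)=3,\wt(X_0)=p$) is the right bookkeeping device, as any element of $L^p$ has weight divisible by $p$ whereas $x(P_1)$ has weight $1$, so $x(P_1) \notin L^p \cdot K = L$ cannot be circumvented. Once $[\Kst(x(E_{s,t}[p])) : L] = p$ is shown to be purely inseparable, combining with the previous paragraph gives separable degree $\frac{p-1}{2}$ and inseparable degree $p$, completing the proof.
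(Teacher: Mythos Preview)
Your decomposition $\Kst \subset L=\Kst(X_0) \subset \Kst(x(P_1))$ and the treatment of the separable step match the paper's approach, and that part is fine. The genuine gap is the inseparable step: neither of your two arguments for $x(P_1)\notin L$ works. The appeal to a ``well-known fact'' about inseparability of $p$-torsion is exactly what is being proved here, so it is circular. The weight argument fails because $L$ is a \emph{field}, not a graded ring: a general element of $L$ (say $s+X_0$) has no weight, so the sentence ``any element of $L^p$ has weight divisible by $p$'' is not meaningful, and in any case $L^p\cdot K = K(s^p,t^p,X_0^p)$ is strictly smaller than $L$, so showing $x(P_1)\notin L^p\cdot K$ would not give $x(P_1)\notin L$.

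The fix is already implicit in Proposition~\ref{prop:theta_irr}: the reversed Eisenstein criterion (Lemma~\ref{lem:Eisenstein-Gauss}) used there applies \emph{verbatim} to $\theta_{s,t}(x^p)=\overline{\psi_{p,s,t}}(x)$, because the nonzero coefficients are the same polynomials $a_{(p-1)/2},\,a_{(p-1)/2+pi},\,a_{(p^2-1)/2}$ and the same prime $Q\mid a_{(p-1)/2}$ works (Corollary~\ref{cor:leading_coeff_is_sep}). Hence $\theta_{s,t}(x^p)$ is itself irreducible of degree $p\cdot\frac{p-1}{2}$, so $[\Kst(u_1):\Kst]=p\cdot\frac{p-1}{2}$, and the inseparable degree is forced to be $p$. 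Equivalently, if $x(P_1)\in L$ then its minimal polynomial $g(x)=\prod_{\bar a}(x-x([a]P_1))$ lies in $\Kst[x]$ and satisfies $g(x)^p=\theta_{s,t}(x^p)/a_{(p-1)/2}$; comparing constant terms forces $a_{(p^2-1)/2}/a_{(p-1)/2}\in\Kst^{\,p}$, contradicting $v_Q(a_{(p^2-1)/2}/a_{(p-1)/2})=-1$ from Corollary~\ref{cor:leading_coeff_is_sep}. Either route closes the gap in one line; your weight bookkeeping is not the right tool here.
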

\begin{proof}
	By Lemma~\ref{lem:poly1} and Proposition~\ref{prop:theta_irr}, if $u_{1}$ is a zero of $\overline{\psi_{p,s,t}}\left(x\right) = \theta_{s,t}\left(x^{p}\right)$, then, the extension $\Kst\left(u_{1}\right) / \Kst$ has separable degree $\frac{p-1}{2}$ and inseparable degree $p$. Moreover, $\Kst\left(x\left(E_{s,t}\left[p\right]\right)\right) = \Kst\left(u_{1}\right)$, since $E_{s,t}\left[p\right]$ is generated by a point whose $x$-coordinate is~$u_{1}$.
\end{proof}

Next, we consider the extensions $\Kst\left(x\left(E_{s,t}\left[p^{k+1}\right]\right)\right)/\Kst\left(x\left(E_{s,t}\left[p^{k}\right]\right)\right)$ for a $k\ge 1$. To investigate this, we first review normalized discrete valuations on a field~$F$. In this paper, a valuation refers to a normalized discrete valuation, which is a surjective group homomorphism $v : F^{\times} \twoheadrightarrow \bbz$ satisfying the property $$
	v\left(a+b\right) \ge \min\left\{v\left(a\right), v\left(b\right)\right\}
$$ for all $a,b \in F^{\times}$  with  $a+b\ne0$. Given this property, it makes sense to define $v\left(0\right) = \infty$. For $a,b \in F$, one can derive the following useful equality: \begin{equation}\label{eqn:valuation}
	v\left(a+b\right) = \min\left\{v\left(a\right),v\left(b\right)\right\}, \text{ if } v\left(a\right) \ne v\left(b\right).
\end{equation}

In our study of the extensions $\Kst\left(x\left(E_{s,t}\left[p^{k+1}\right]\right)\right)/\Kst\left(x\left(E_{s,t}\left[p^{k}\right]\right)\right)$, we focus on the valuations on the field of fractions of a Dedekind domain at a non-zero prime ideal.

In our setting, since $a_{\frac{p-1}{2}}$ is not constant, there exists a prime divisor $Q$ of $a_{\frac{p-1}{2}}$ in $K\left[s,t\right]$. Then,  we have that $Q \notin K\left[s\right]$ or $Q \notin K\left[t\right]$. We let \begin{equation}\label{R0}
	R_{0} := \begin{cases}
		 \left(K\left(s\right)\right)\left[t\right], \text{ if }Q \notin K\left[s\right],\\
	\left(K\left(t\right)\right)\left[s\right], \text{ if }Q \notin K\left[t\right] \text{.}
	\end{cases}
\end{equation} 
Note that both  $\left(K\left(s\right)\right)\left[t\right]$ and $\left(K\left(t\right)\right)\left[s\right]$ are Dedekind domains. By Gauss's lemma, $Q$ is a prime element in $R_{0}$. The valuations to be used in the proof of the following Proposition~\ref{prop:eta_irr} are those on extensions of the field of fractions of $R_{0}$ at primes lying above the prime ideal $QR_{0}$.

\begin{prop}\label{prop:eta_irr}
	For a prime divisor $Q$ of $a_{\frac{p-1}{2}}$ in $K\left[s,t\right]$, let $R_{0}$ be the Dedekind domain defined in \eqref{R0} and $F_{0}$ be the field of fractions of $R_{0}$. We choose a point $P_{1} \in E_{s,t}$ of order~$p$. Inductively, for each integer $k \ge 1$, we choose a point $P_{k+1} \in E_{s,t}$ of order $p^{k+1}$ such that $\left[p\right] P_{k+1} = P_{k}$. We let $u_{k} := x\left(P_{k}\right)$, $F_{k}:= F_{0}\left(u_{k}\right)$, and $R_{k}$ be the integral closure of $R_{0}$ in $F_{k}$. By the definition of $R_{0}$ in \eqref{R0},  $\fkQ_{0} := QR_{0}$ is a prime ideal. Inductively, we choose a prime ideal $\fkQ_{k} \subseteq R_{k}$ lying above $\fkQ_{k-1}$ for each $k\ge 1$. Then, we have that for each integer $k\geq 1$, the valuation of $u_{k}$ at $\fkQ_{k}$ is $-1$.
\end{prop}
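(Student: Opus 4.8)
The plan is to prove the assertion by induction on $k$, reducing each step to a Newton-polygon computation at $\fkQ_{k}$ for the polynomial satisfied over $F_{k}$ by $u_{k+1}^{p}$, under the inductive hypothesis $v_{\fkQ_{k}}(u_{k})=-1$. The two polynomials feeding the argument are $\theta_{s,t}$ and $\eta_{s,t,\tx}$ of Lemmas~\ref{lem:poly1} and~\ref{lem:poly2}: since $\overline{\psi_{p,s,t}}(x)=\theta_{s,t}(x^{p})$ and $P_{1}$ generates $E_{s,t}[p]$, the element $u_{1}^{p}$ is a root of $\theta_{s,t}(X)$ over $F_{0}$; and since $[p]P_{k+1}=P_{k}$, Lemma~\ref{lem:poly2} applied with $\tx=u_{k}$ shows that $u_{k+1}^{p}$ is a root of $\eta_{s,t,u_{k}}(X)$ over $F_{k}$. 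Using the identity $-\theta_{s,t}(X)^{2}=\sum_{i=1}^{p-1}c_{pi-1}X^{p-i}+c_{p^{2}-1}$ recorded in Lemma~\ref{lem:poly2},
$$
\eta_{s,t,\tx}(X)=\Bigl(X^{p}+\sum_{i=1}^{p-1}b_{pi}X^{p-i}+b_{p^{2}}\Bigr)-\tx\,\theta_{s,t}(X)^{2},
$$
and I shall use it in this form; I shall also use $v_{\fkQ_{0}}(Q)=1$ and $v_{\fkQ_{k+1}}(y)=e(\fkQ_{k+1}/\fkQ_{k})\,v_{\fkQ_{k}}(y)$ for $y\in F_{k}$.

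For the base case $k=1$, Corollary~\ref{cor:leading_coeff_is_sep} gives $v_{\fkQ_{0}}(a_{\frac{p-1}{2}})=1$, $v_{\fkQ_{0}}(a_{\frac{p-1}{2}+pi})\ge1$ for $1\le i\le\frac{p-3}{2}$, and $v_{\fkQ_{0}}(a_{\frac{p^{2}-1}{2}})=0$; the Newton polygon of $\theta_{s,t}(X)$ at $\fkQ_{0}$ is then the single segment from $(0,0)$ to $(\frac{p-1}{2},1)$, so every root, in particular $u_{1}^{p}$, has $\fkQ_{0}$-adic valuation $-\frac{2}{p-1}$ (equivalently, apply~\eqref{eqn:valuation} to $\theta_{s,t}(u_{1}^{p})=0$). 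Since $u_{1}$ is a root of the degree-$\frac{p(p-1)}{2}$ polynomial $\theta_{s,t}(x^{p})\in F_{0}[x]$, we get $[F_{1}:F_{0}]\le\frac{p(p-1)}{2}$; as $v(u_{1})=-\frac{2}{p(p-1)}$ must lie in the value group of the $\fkQ_{1}$-adic valuation, integrality forces $\frac{p(p-1)}{2}\mid e(\fkQ_{1}/\fkQ_{0})$, hence $e(\fkQ_{1}/\fkQ_{0})=\frac{p(p-1)}{2}$ and $v_{\fkQ_{1}}(u_{1})=-1$.

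For the inductive step, assume $v_{\fkQ_{k}}(u_{k})=-1$. The crux is a reduction-mod-$Q$ fact: since $Q$ divides $a_{\frac{p-1}{2}}$ and every $a_{\frac{p-1}{2}+pi}$ (Corollary~\ref{cor:leading_coeff_is_sep}), we have $\theta_{s,t}(X)\equiv a_{\frac{p^{2}-1}{2}}\pmod{Q}$, a non-zero constant modulo $Q$ --- the curve $E_{s,t}$ has supersingular reduction along $Q$, as underlies Proposition~\ref{prop:leading_coeff_and_other_coeff}. Hence $-\theta_{s,t}(X)^{2}\equiv-a_{\frac{p^{2}-1}{2}}^{2}\pmod{Q}$, so $Q\mid c_{pi-1}$ for $1\le i\le p-1$ while $Q\nmid c_{p^{2}-1}$. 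As $b_{pi},b_{p^{2}}\in\Fp[s,t]$ are $\fkQ_{k}$-integral and $v_{\fkQ_{k}}(u_{k})=-1$, the displayed form of $\eta_{s,t,u_{k}}(X)$ shows that its leading coefficient is $1$, the coefficient of $X^{p-i}$ is $\fkQ_{k}$-integral for $1\le i\le p-1$, and the constant term $b_{p^{2}}+u_{k}c_{p^{2}-1}$ has $\fkQ_{k}$-adic valuation exactly $-1$ (by~\eqref{eqn:valuation}). Hence the Newton polygon of $\eta_{s,t,u_{k}}$ at $\fkQ_{k}$ is the single segment from $(0,-1)$ to $(p,0)$ --- the remaining points lie weakly above it --- so every root, in particular $u_{k+1}^{p}$, has $\fkQ_{k}$-adic valuation $-\frac{1}{p}$, whence $v(u_{k+1})=-\frac{1}{p^{2}}$. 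Because $F_{k+1}=F_{k}(u_{k+1})$ and $u_{k+1}$ is a root of the degree-$p^{2}$ polynomial $\eta_{s,t,u_{k}}(x^{p})\in F_{k}[x]$, one has $[F_{k+1}:F_{k}]\le p^{2}$; together with $p^{2}\mid e(\fkQ_{k+1}/\fkQ_{k})$ (forced by $v(u_{k+1})=-\frac{1}{p^{2}}$ lying in the value group) this gives $e(\fkQ_{k+1}/\fkQ_{k})=p^{2}$ and $v_{\fkQ_{k+1}}(u_{k+1})=-1$, which closes the induction.

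The hard part is exactly the crux above: recognising that, along $Q$, the $p$-division polynomial of $E_{s,t}$ degenerates to a constant, so that the order-one pole of $u_{k}$ at $\fkQ_{k}$ --- which is precisely what the inductive hypothesis supplies --- is funnelled entirely into the constant coefficient of $\eta_{s,t,u_{k}}$ and nowhere else. That alone pins the Newton polygon to one segment, of slope $\frac{1}{p}$; the remainder --- reading off the root valuation and squeezing $e(\fkQ_{k+1}/\fkQ_{k})$ against the crude degree bound $p^{2}$ --- is formal, and this mechanism exactly regenerates the hypothesis $v=-1$ at the next level, the base case being the same computation with $\theta_{s,t}$ and Corollary~\ref{cor:leading_coeff_is_sep} in place of $\eta_{s,t,u_{k}}$ and the congruence for the $c_{pi-1}$.
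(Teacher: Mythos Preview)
Your proof is correct and follows essentially the same approach as the paper: both proceed by induction, using Corollary~\ref{cor:leading_coeff_is_sep} (and its consequence $Q\mid c_{pi-1}$, $Q\nmid c_{p^{2}-1}$ from Lemma~\ref{lem:poly2}) together with the inductive hypothesis $v(u_{k})=-1$ to control the valuations of the coefficients of $\eta_{s,t,u_{k}}$, and then squeeze the ramification index against the degree bound $[F_{k+1}:F_{k}]\le p^{2}$. You package the computation as a Newton-polygon argument, whereas the paper carries out the same balancing directly via the ultrametric inequality~\eqref{eqn:valuation}; the difference is purely expository.
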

\begin{proof}
	We prove by induction on $k$, following the notations from the proofs of Lemma~\ref{lem:poly1} and Lemma~\ref{lem:poly2}. For $k=1$, let $v$ denote the valuation on $F_{0}$ at $\fkQ_{0}$, $w$ the valuation on $F_{1}$ at $\fkQ_{1}$. There exists a positive integer $e$ such that $\left.w\right|_{F_{0}}= ev$. Since $u_{1}=x(P_1)$ is a zero of $\theta_{s,t}\left(x^{p}\right)$ by Lemma~\ref{lem:poly1}, we have \begin{equation}\label{eqn:org_theta}
		a_{\frac{p-1}{2}}u_{1}^{p\frac{p-1}{2}} + \left(\sum_{i=1}^{\frac{p-3}{2}}a_{\frac{p-1}{2}+pi}u_{1}^{p\frac{p-1}{2}-pi}\right) +  a_{\frac{p^{2}-1}{2}} = 0.
	\end{equation} 
	By Corollary~\ref{cor:leading_coeff_is_sep}, we see that $$
		v\left(a_{\frac{p-1}{2}}\right) = 1, \quad v\left(a_{\frac{p-1}{2}+pi}\right) \geq 1,  \text{ and } v\left(a_{\frac{p^{2}-1}{2}}\right) = 0.
	$$ Therefore, if $w\left(u_{1}\right)\ge 0$, then the only term with a non-positive $w$-valuation in the left hand side (LHS) of \eqref{eqn:org_theta} is $a_{\frac{p^{2}-1}{2}}$. By the property~\eqref{eqn:valuation}, the $w$-valuation of LHS of \eqref{eqn:org_theta} is $w\left(a_{\frac{p^{2}-1}{2}}\right) = 0$, which contradicts $w\left(0\right) = \infty$. Hence, $w\left(u_{1}\right) < 0$, equivalently, $w\left(u_{1}^{-1}\right) > 0$. To compute $w\left(u_{1}^{-1}\right)$, we divide both sides of \eqref{eqn:org_theta} by~$u_{1}^{p\frac{p-1}{2}}$:  \begin{equation}\label{eqn:org_theta_2}
		a_{\frac{p-1}{2}} + \left(\sum_{i=1}^{\frac{p-3}{2}}a_{\frac{p-1}{2}+pi}u_{1}^{-pi}\right) +  a_{\frac{p^{2}-1}{2}} u_{1}^{-p\frac{p-1}{2}} = 0.
	\end{equation} 
	For each $1 \le i \le \frac{p-3}{2}$,  Corollary~\ref{cor:leading_coeff_is_sep} shows that $$
		w\left(a_{\frac{p-1}{2}+pi}u_{1}^{-pi}\right) > w\left(a_{\frac{p-1}{2}+pi}\right) \ge w\left(a_{\frac{p-1}{2}}\right).
	$$
	If $w\left(a_{\frac{p-1}{2}}\right) \ne w\left(a_{\frac{p^{2}-1}{2}} u_{1}^{-p\frac{p-1}{2}}\right)$, then by the property~\eqref{eqn:valuation}, the $w$-valuation of LHS of \eqref{eqn:org_theta_2} would be $\min\left\{w\left(a_{\frac{p-1}{2}}\right), w\left(a_{\frac{p^{2}-1}{2}} u_{1}^{-p\frac{p-1}{2}}\right)\right\}$, which again contradicts  $w\left(0\right) = \infty$. Therefore, we must have $w\left(a_{\frac{p-1}{2}}\right) = w\left(a_{\frac{p^{2}-1}{2}} u_{1}^{-p\frac{p-1}{2}}\right)$. This implies   $p\frac{p-1}{2} \mid e$,
 as $$
	e = e v\left(a_{\frac{p-1}{2}}\right) = w\left(a_{\frac{p-1}{2}}\right) = w\left(a_{\frac{p^{2}-1}{2}} u_{1}^{-p\frac{p-1}{2}}\right) = p\frac{p-1}{2}w\left(u_{1}^{-1}\right).
	$$
 On the other hand, $e \le \left[F_{1}:F_{0}\right] = p\frac{p-1}{2}$ by Corollary~\ref{cor:first_extn}. Hence, $e = p\frac{p-1}{2} = \left[F_{1}:F_{0}\right]$ and $w\left(u_{1}^{-1}\right) = 1$.

	Now, suppose the statement is true for some $k\geq 1$ and we prove it for $k+1$. Since $P_{k} = \left[p\right] P_{k+1}$, by Lemma~\ref{lem:poly2} we have $$
		u_{k+1}^{p^{2}} + \left(\sum_{i=1}^{p-1}\left(b_{pi}+c_{pi-1}u_{k}\right) u_{k+1}^{p^{2}-pi}\right) + \left(b_{p^{2}}+c_{p^{2}-1}u_{k}\right) = 0,
	$$
	which can be rewritten as (noting that $u_k\neq 0$ from $v\left(u_{k}^{-1}\right) = 1$ by the induction hypothesis):
	\begin{equation}\label{eqn:sigma_int_form}
		u_{k}^{-1}u_{k+1}^{p^{2}} + \left(\sum_{i=1}^{p-1}\left(b_{pi}u_{k}^{-1}+c_{pi-1}\right) u_{k+1}^{p^{2}-pi}\right) + \left(b_{p^{2}}u_{k}^{-1}+c_{p^{2}-1}\right) = 0.
	\end{equation} 
	Let $v$ be the valuation on $F_{k}$ at $\fkQ_{k}$, $w$ be the valuation on $F_{k+1}$ at $\fkQ_{k+1}$. There exists a positive integer $e$ such that $\left.w\right|_{F_{k}}= ev$. For each $1 \le i\le p-1$, we have $w\left(b_{pi}\right), w\left(b_{p^{2}}\right) \ge 0$ since  $b_{pi}, b_{p^{2}} \in R_{0}$. By Lemma~\ref{lem:poly2} and  Corollary~\ref{cor:leading_coeff_is_sep}~(b), for each $1 \le i\le p-1$, we have $w\left(c_{pi-1}\right) > 0$. Since $c_{p^{2}-1} = -a_{\frac{p^{2}-1}{2}}^{2}$ by Lemma~\ref{lem:poly2},  Corollary~\ref{cor:leading_coeff_is_sep}~(c) implies  that $w\left(c_{p^{2}-1}\right) = 0$. Thus, we conclude \begin{equation} \label{eqn:ineq}
		w\left(b_{pi}\right), w\left(b_{p^{2}}\right)\ge 0, w\left(c_{pi-1}\right) \ge 1, \text{ and } w\left(c_{p^{2}-1}\right) = 0.
	\end{equation}
	So, since  $v\left(u_{k}^{-1}\right) = 1$ by the induction hypothesis, if $w\left(u_{k+1}\right) \ge 0$, the $w$-valuations of all terms except $c_{p^{2}-1}$ in LHS of \eqref{eqn:sigma_int_form} are positive. Since $w\left(c_{p^{2}-1}\right) = 0$, the property~\eqref{eqn:valuation} implies  the $w$-valuation of LHS of \eqref{eqn:sigma_int_form} is $0$, which contradicts $w\left(0\right) = \infty$. Therefore, $w\left(u_{k+1}^{-1}\right) > 0$. 
 
	To compute $w\left(u_{k+1}^{-1}\right)$, we divide both sides of \eqref{eqn:sigma_int_form} by $u_{k+1}^{p^{2}}$: \begin{equation}\label{eqn:sigma_inverse_form}
		u_{k}^{-1} + \left(\sum_{i=1}^{p-1}\left(b_{pi}u_{k}^{-1}+c_{pi-1}\right) u_{k+1}^{-pi}\right) + b_{p^{2}}u_{k}^{-1}u_{k+1}^{-p^{2}}+c_{p^{2}-1}u_{k+1}^{-p^{2}} = 0.
	\end{equation} 
	Since $w\left(u_{k+1}^{-1}\right) > 0$, and $v\left(u_{k}^{-1}\right) = 1$, referring to \eqref{eqn:ineq} we find that $$
		w\left(c_{pi-1}u_{k+1}^{-pi}\right) > w\left(c_{pi-1}\right) = ev\left(c_{pi-1}\right) \ge e = ev \left(u_{k}^{-1}\right) = w\left(u_{k}^{-1}\right),
	$$ 
and the $w$-valuations of  $b_{pi}u_{k}^{-1} u_{k+1}^{-pi}$, $b_{p^{2}}u_{k}^{-1}u_{k+1}^{-p^{2}}$, and $c_{pi-1} u_{k+1}^{-pi}$ exceed $w\left(u_{k}^{-1}\right)$. By~\eqref{eqn:valuation}, if $w\left(u_{k}^{-1}\right) \ne w\left(c_{p^{2}-1} u_{k+1}^{-p^{2}}\right)$, then the $w$-valuation of LHS of \eqref{eqn:sigma_inverse_form} is $\min \left\{ w\left(u_{k}^{-1}\right), w\left(c_{p^{2}-1} u_{k+1}^{-p^{2}}\right) \right\}$, which again contradicts $w\left(0\right) = \infty$. Therefore, $w\left(u_{k}^{-1}\right) = w\left(c_{p^{2}-1} u_{k+1}^{-p^{2}}\right)$. This implies  $p^{2} \mid e$, since $$
		e
		= ev\left(u_{k}^{-1}\right) 
		= w\left(u_{k}^{-1}\right)
		= w\left(c_{p^{2}-1}u_{k+1}^{-p^{2}}\right)
		= p^{2}w\left(u_{k+1}^{-1}\right).
	$$ On the other hand, $e \le \left[F_{k+1}:F_{k}\right] \le p^{2}$, since $F_{k+1}$ is generated by $u_{k+1}$ over $F_{k}$, and $u_{k+1}$ is a zero of a  polynomial $\eta_{s,t,u_{k}}\left(x^{p}\right)$ of degree $p^{2}$ defined over $F_{k}$ by Lemma~\ref{lem:poly2}. Hence, we have $e = \left[F_{k+1}:F_{k}\right] = p^{2}$ and $w\left(u_{k+1}^{-1}\right) = 1$.
\end{proof}

\begin{cor}\label{cor:eta_irr}
	For each integer $k\geq 1$, the normal extension $\Kst\left(x\left(E_{s,t}\left[p^{k+1}\right]\right)\right)$ over $\Kst\left(x\left(E_{s,t}\left[p^{k}\right]\right)\right)$ has separable degree $p$ and inseparable degree $p$.
\end{cor}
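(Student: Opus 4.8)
The plan is to combine the degree computation contained in the proof of Proposition~\ref{prop:eta_irr} with an explicit factorization of $\eta_{s,t,u_k}$ into linear factors, in order to separate the contributions of the separable and inseparable parts. Write $K_k := \Kst\left(x\left(E_{s,t}\left[p^{k}\right]\right)\right)$. Since $E_{s,t}$ is ordinary, $E_{s,t}\left[p^{k}\right] = \langle P_k \rangle$ is cyclic of order $p^{k}$, and $x\left(\left[a\right]P_k\right)$ is a rational function of $u_k = x\left(P_k\right)$ for every $a$; hence $K_k = \Kst\left(u_k\right) = F_k$ in the notation of Proposition~\ref{prop:eta_irr} (recall that $F_0$ is the fraction field of $R_0$, which is $\Kst$ itself), and likewise $K_{k+1} = K_k\left(u_{k+1}\right)$. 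The proof of Proposition~\ref{prop:eta_irr} establishes $\left[F_{k+1}:F_k\right] = p^{2}$, so $\left[K_{k+1}:K_k\right] = p^{2}$; and since $u_{k+1}$ is a root of the monic degree-$p^{2}$ polynomial $\eta_{s,t,u_k}\left(x^{p}\right)$ of Lemma~\ref{lem:poly2}, this polynomial is the minimal polynomial of $u_{k+1}$ over $K_k$.

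The heart of the argument will be to show that $\eta_{s,t,u_k}\left(X\right)$ is separable over $K_k$. By the description of the zeros of the polynomial~\eqref{eqn:poly2}, the roots of $\eta_{s,t,u_k}\left(x^{p}\right)$ are precisely the $x$-coordinates of the points $S \in E_{s,t}$ with $x\left(\left[p\right]S\right) = u_k$, that is, with $\left[p\right]S = \pm P_k$; such $S$ are the points $P_{k+1} + \left[a\right]P_1$ and their negatives for $a \in \bbz/p\bbz$, so the root set is $\left\{ x\left(P_{k+1} + \left[a\right]P_1\right) : a \in \bbz/p\bbz \right\}$. I claim these $p$ values are pairwise distinct. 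If $x\left(P_{k+1} + \left[a\right]P_1\right) = x\left(P_{k+1} + \left[a'\right]P_1\right)$, then $P_{k+1} + \left[a\right]P_1 = \pm\left(P_{k+1} + \left[a'\right]P_1\right)$; the $+$ case forces $a = a'$, while the $-$ case yields $\left[2\right]P_{k+1} + \left[a+a'\right]P_1 = O$. But $P_1 = \left[p^{k}\right]P_{k+1}$, so the latter reads $\left[\,2 + \left(a+a'\right)p^{k}\,\right]P_{k+1} = O$, and since $k \ge 1$ and $p$ is odd, $2 + \left(a+a'\right)p^{k}$ is a unit modulo $p^{k+1}$, so this point has order $p^{k+1} \ne 1$ --- a contradiction. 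Hence $\eta_{s,t,u_k}\left(x^{p}\right) = \prod_{a \in \bbz/p\bbz}\left(x - x\left(P_{k+1} + \left[a\right]P_1\right)\right)^{p}$, so that $\eta_{s,t,u_k}\left(X\right) = \prod_{a \in \bbz/p\bbz}\left(X - x\left(P_{k+1} + \left[a\right]P_1\right)^{p}\right)$ has $p$ distinct roots and is separable of degree $p$.

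It then remains only to read off the two degrees. The minimal polynomial of $u_{k+1}$ over $K_k$ is $g\left(x^{p}\right)$ with $g := \eta_{s,t,u_k}$ separable of degree $p$; since $g$ is separable and non-constant it is not a polynomial in $X^{p}$, so in the canonical form $m_0\left(x^{p^{e}}\right)$ of this minimal polynomial (with $m_0$ separable and $e$ maximal) one has $e = 1$ and $m_0 = g$. Consequently the extension $K_{k+1}/K_k$ has separable degree $\deg g = p$ and inseparable degree $p^{e} = p$. Finally, $K_{k+1}/K_k$ is normal: each root $x\left(P_{k+1} + \left[a\right]P_1\right)$ of the minimal polynomial of $u_{k+1}$ is a rational function of $u_{k+1} = x\left(P_{k+1}\right)$, so that minimal polynomial already splits over $K_{k+1} = K_k\left(u_{k+1}\right)$. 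The only genuine obstacle is the distinctness assertion of the second paragraph, and it is there that the hypothesis $k \ge 1$ is essential: for $k = 0$ the relation degenerates to $\left[2 + a + a'\right]P_1 = O$, which does hold for suitable $a, a'$, in keeping with the fact that the bottom step $\Kst\left(x\left(E_{s,t}\left[p\right]\right)\right)/\Kst$ is governed by $\theta_{s,t}$ (Corollary~\ref{cor:first_extn}) rather than by $\eta_{s,t,\tx}$.
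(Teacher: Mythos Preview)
Your proof is correct and follows essentially the same strategy as the paper's: establish that $[K_{k+1}:K_k]=p^{2}$, then show that the minimal polynomial $\eta_{s,t,u_k}(x^{p})$ has exactly $p$ distinct roots, whence the separable and inseparable degrees are each~$p$. The differences are in execution rather than in idea. The paper re-applies the Eisenstein--Gauss criterion (Lemma~\ref{lem:Eisenstein-Gauss}) to the polynomial~\eqref{eqn:sigma_inverse_poly} to obtain irreducibility, whereas you simply invoke the equality $[F_{k+1}:F_k]=p^{2}$ already obtained inside the proof of Proposition~\ref{prop:eta_irr}; your shortcut is legitimate, though it relies on a fact buried in a proof rather than in a statement. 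Conversely, where the paper asserts in one line that $\eta_{s,t,u_k}(x^{p})$ has $p$ distinct zeros ``since $E_{s,t}$ is ordinary'', you supply the explicit verification via $[2+(a+a')p^{k}]P_{k+1}\ne O$, and you also justify normality by observing that each root $x\!\left([1+ap^{k}]P_{k+1}\right)$ is rational in $u_{k+1}$. These additions genuinely tighten the argument, and your closing remark explaining why the same distinctness computation fails for $k=0$ is a nice sanity check against Corollary~\ref{cor:first_extn}.
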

\begin{proof}
	Following the notations of Proposition~\ref{prop:eta_irr}, we note that the field $F_{k+1}$ is generated by $u_{k+1}$ over $F_{k}$ and $u_{k+1}$ is a zero of the polynomial \begin{equation}\label{eqn:sigma_inverse_poly}
		u_{k}^{-1}x^{p^{2}} + \left(\sum_{i=1}^{p-1}\left(b_{pi}u_{k}^{-1}+c_{pi-1}\right) x^{p^{2} - pi}\right) + \left(b_{p^{2}}u_{k}^{-1}+c_{p^{2}-1}\right)
	\end{equation} of degree $p^{2}$ in variable $x$ with coefficients in the localization $\left(R_{k}\right)_{\fkQ_{k}}$ since the valuation of $u_{k}^{-1}$ at $\fkQ_{k}$ is positive. Corollary~\ref{cor:leading_coeff_is_sep}, along with the proof of Proposition~\ref{prop:eta_irr}, and the definitions of the coefficients $c_{pi-1}$ and $c_{p^{2}-1}$ from Lemma~\ref{lem:poly2}, shows that  the conditions in Lemma~\ref{lem:Eisenstein-Gauss} for polynomial~\eqref{eqn:sigma_inverse_poly}  are satisfied, where $R_{k}$ is a Dedekind domain with a prime ideal $\fkQ_{k}$. Hence, we conclude that \eqref{eqn:sigma_inverse_poly} is irreducible over $F_{k}$ by Lemma~\ref{lem:Eisenstein-Gauss}, and so $\left[F_{k+1}:F_{k}\right] = p^{2}$. 

	If the inseparable degree of the extension $F_{k+1}/F_{k}$ is $p^{2}$, then $\eta_{s,t,u_{k}}\left(x^{p}\right)$ has only one zero. However, since $E_{s,t}$ is ordinary and $\eta_{s,t,u_{k}}\left(x^{p}\right)$ has distinct $p$ zeros,  the extension $F_{k+1}/F_{k}$ has separable degree~$p$ and inseparable degree~$p$. Moreover, $\Kst\left(x\left(E_{s,t}\left[p^{k}\right]\right)\right) = F_{k}$ and $\Kst\left(x\left(E_{s,t}\left[p^{k+1}\right]\right)\right) = F_{k+1}$, since $E_{s,t}\left[p^{k}\right]$ and $E_{s,t}\left[p^{k+1}\right]$ are generated by $P_{k}$ and $P_{k+1}$, respectively.
\end{proof}


Finally, we compute  the separable   and inseparable degrees of the  extension $\Kst\left(E_{s,t}\left[p^{n}\right]\right)$ over $\Kst$, and its automorphism group.

\begin{prop}\label{prop:modified_main}
	For each  integer $n\geq 1$, the normal  extension $\Kst\left(E_{s,t}\left[p^{n}\right]\right)$ over $\Kst$ has separable degree $p^{n-1}\left(p-1\right)$ and inseparable degree $p^{n}$.
\end{prop}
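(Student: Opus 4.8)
The plan is to compute the two degrees by climbing the tower in Figure~\ref{fig:tower}, using that separable and inseparable degrees are multiplicative in towers of finite extensions.

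First I would package the lower part of the tower. Writing $F_k := \Kst(x(E_{s,t}[p^k]))$, so that $F_0 = \Kst$, Corollary~\ref{cor:first_extn} gives that $F_1/F_0$ has separable degree $\frac{p-1}{2}$ and inseparable degree $p$, and Corollary~\ref{cor:eta_irr} gives that $F_{k+1}/F_k$ has separable degree $p$ and inseparable degree $p$ for each $1 \le k \le n-1$. Multiplying these up, $F_n/\Kst$ has separable degree $\frac{p-1}{2}\,p^{n-1}$ and inseparable degree $p^n$. It then remains to show that the top rung $\Kst(E_{s,t}[p^n])/F_n$ contributes a factor $2$ to the separable degree and $1$ to the inseparable degree.

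For the top rung: since $E_{s,t}$ is ordinary, $E_{s,t}[p^n]$ is cyclic; let $P_n$ be the generator of order $p^n$ fixed in Proposition~\ref{prop:eta_irr}. Because multiplication-by-$m$ is a $\Kst$-morphism of $E_{s,t}$, every $p^n$-torsion point has coordinates in $\Kst(x(P_n),y(P_n))$, so $\Kst(E_{s,t}[p^n]) = \Kst(x(P_n),y(P_n))$, while $F_n = \Kst(x(P_n))$ (as already recorded in Corollary~\ref{cor:eta_irr}). Thus $\Kst(E_{s,t}[p^n]) = F_n(y(P_n))$, where $y(P_n)$ satisfies $Y^2 = x(P_n)^3 + s\,x(P_n) + t$ over $F_n$; hence this rung has degree at most $2$ and is separable, as $p \neq 2$. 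To pin the degree down to exactly $2$, I would invoke the valuation $v := v_{\fkQ_n}$ on $F_n$ at the prime $\fkQ_n$ of the Dedekind domain $R_n$ supplied by Proposition~\ref{prop:eta_irr}: that proposition gives $v(x(P_n)) = -1$, while $s,t \in R_0$ together with the fact that $\fkQ_n$ lies over $\fkQ_0 = QR_0$ force $v(s),v(t) \ge 0$. Then \eqref{eqn:valuation} yields $v(x(P_n)^3 + s\,x(P_n) + t) = v(x(P_n)^3) = -3$, which is odd; since every nonzero square in $F_n$ has even valuation at $\fkQ_n$, we conclude $y(P_n) \notin F_n$, so $[\Kst(E_{s,t}[p^n]):F_n] = 2$ and the extension is separable.

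Putting the pieces together, $\Kst(E_{s,t}[p^n])/\Kst$ has separable degree $2 \cdot \frac{p-1}{2}\,p^{n-1} = (p-1)p^{n-1}$ and inseparable degree $1 \cdot p^n = p^n$, as asserted. The one genuinely new point, and the step I expect to need the most care, is the non-membership $y(P_n)\notin F_n$: everything else is bookkeeping over the previous corollaries, whereas here one must be careful to check $v(s),v(t)\ge 0$ so that the cube term strictly dominates in the valuation of $x(P_n)^3 + s\,x(P_n) + t$, which is precisely what makes that valuation odd.
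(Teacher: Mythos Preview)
Your argument is correct, and the bookkeeping through Corollaries~\ref{cor:first_extn} and~\ref{cor:eta_irr} matches the paper. The interesting divergence is in how you handle the top rung $\Kst(E_{s,t}[p^n])/F_n$.

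The paper proves $y(P_n)\notin F_n$ by contradiction via a group-theoretic twist argument: assuming $y_n\in\Kst(x_n)$, it first forces $\fkA_{E_{s,t}/\Kst}(n)\cong\big((\bbz/p^n\bbz)^\times\big)^2$ and $p\equiv 3\pmod 4$, then specializes to the quadratic twists $E_{s^3,s^3t}$ and $E_{t^2s,t^3}$ to deduce $\sqrt{s},\sqrt{t}\in\Kst(x_n)$, producing a biquadratic subextension that contradicts cyclicity of the automorphism group. Your route is more direct: you recycle the valuation $v_{\fkQ_n}$ already built in Proposition~\ref{prop:eta_irr}, observe that $s,t\in R_0\subseteq R_n$ forces $v_{\fkQ_n}(s),v_{\fkQ_n}(t)\ge 0$, and then the dominant-term computation gives $v_{\fkQ_n}\!\big(x(P_n)^3+sx(P_n)+t\big)=-3$, an odd integer, so $y(P_n)^2$ cannot be a square in $F_n$. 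This is both shorter and squeezes more out of the valuation machinery that was set up anyway; the paper's argument, by contrast, is independent of that machinery and instead illustrates how the twist technique (used again in Corollary~\ref{cor:main1}) interacts with the structure of $(\bbz/p^n\bbz)^\times$. Both are valid; yours is the more economical proof of this particular proposition.
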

\begin{proof}
	Let $P_{n} = \left(x_{n},y_{n}\right)$ be a generator of $E_{s,t}\left[p^{n}\right] \cong \bbz/p^{n}\bbz$. Corollary~\ref{cor:first_extn} and  Corollary~\ref{cor:eta_irr} prove that $\Kst\left(x_{n}\right)$ over $\Kst$ has inseparable degree $p^{n}$ and separable degree $\frac{p-1}{2}p^{n-1}$. We show that $\left[\Kst\left(P_{n}\right):\Kst\left(x_{n}\right)\right] = 2$. Then, the extension $\Kst\left(E_{s,t}\left[p^{n}\right]\right)/\Kst$ has inseparable degree $p^{n}$ and separable extension $p^{n-1}\left(p-1\right)$.

	We suppose that $\Kst\left(P_{n}\right) = \Kst\left(x_{n}\right)$. It is equivalent to that  $$
		y_{n} \in \Kst\left(x_{n}\right).
	$$ Since the normal extension $\Kst\left(P_{n}\right)$ over $\Kst$ has separable degree $\frac{p-1}{2}p^{n-1}$, and $\left(\bbz/p^{n}\bbz\right)^{\times}$ has the unique subgroup $\left(\left(\bbz/p^{n}\bbz\right)^{\times}\right)^{2}$ of order $\frac{p-1}{2}p^{n-1}$, we have $\fkA_{E_{s,t}/\Kst}\left(n\right) \cong \left(\left(\bbz/p^{n}\bbz\right)^{\times}\right)^{2}$. Moreover, we conclude that  $p \equiv 3\pmod{4}$, since otherwise $-1 \in \left(\left(\bbz/p^{n}\bbz\right)^{\times}\right)^{2}$, which implies that there exists an automorphism $\sigma \in \fkA_{E_{s,t}/\Kst}\left(n\right)$ that fixes $x_{n}$ but not $y_{n}$, so $y_{n} \notin \Kst\left(x_{n}\right)$.

	Since the quadratic twist $E_{s^{3},s^{3}t}$ of $E_{s,t}$ by $s$ is also a specialization of $E_{s,t}$ at $(s^3, s^3t)$, we see that $\fkA_{E_{s^{3},s^{3}t}/\Kst}\left(n\right) \subseteq \left(\left(\bbz/p^{n}\bbz\right)^{\times}\right)^{2}$. This can be deduced as follows: if we consider $E_{s,t}$ as an elliptic curve defined over the field $\left(K\left(s_{*},t_{*}\right)\right)\left(s,t\right)$ for two variables $s_{*}$ and $t_{*}$, then $$
		\fkA_{E_{s^{3},s^{3}t}/\Kst}\left(n\right) = \fkA_{E_{s_{*}^{3},s_{*}^{3}t_{*}}/K\left(s_{*},t_{*}\right)}\left(n\right) \subseteq \fkA_{E_{s,t}/\left(K\left(s_{*},t_{*}\right)\right)\left(s,t\right)}\left(n\right) = \fkA_{E_{s,t}/\Kst}\left(n\right),
	$$ since $E_{s_{*}^{3},s_{*}^{3}t_{*}}$ is a specialization of $E_{s,t}$.
	Since $p\equiv 3\pmod{4}$, for any $r \in \left(\left(\bbz/p^{n}\bbz\right)^{\times}\right)^{2}$, $r \equiv \pm 1\pmod{p^{n}}$ implies $r \equiv 1\pmod{p^{n}}$. In other words, recalling that $\left(sx_{n},\sqrt{s^{3}}y_{n}\right)$ is a generator of $E_{s^{3}s^{3}t}\left[p^{n}\right]$, any automorphism in $\fkA_{E_{s^{3},s^{3}t}/\Kst}\left(n\right)$ fixing $sx_{n}$ must fix $\sqrt{s^{3}}y_{n}$, which implies that $\sqrt{s^{3}}y_{n} \in \Kst\left(sx_{n}\right) = \Kst\left(x_{n}\right)$ and $\sqrt{s} \in \Kst\left(x_{n}\right)$. Similarly, considering the quadratic twist of $E_{s,t}$ by $t$, we conclude $\sqrt{t} \in \Kst\left(x_{n}\right)$. Hence, there exists a biquadratic subextension  $K\left(\sqrt{s},\sqrt{t}\right)$ of $\Kst\left(x_{n}\right) = \Kst\left(P_{n}\right)$ over $K(s,t)$, which contradicts that the automorphism group $\fkA_{E_{s,t}/\Kst}\left(n\right)$ is isomorphic to a cyclic group $ \left(\left(\bbz/p^{n}\bbz\right)^{\times}\right)^{2}$.
\end{proof}

\

We are now ready to prove Theorem~\ref{thm:main}, from which we prove Corollary~\ref{cor:main1} and Corollary~\ref{cor:main2}.
\begin{proof}[Proof of  Theorem~\ref{thm:main}.]
	By Proposition~\ref{prop:modified_main}, the normal extension $\Kst\left(E_{s,t}\left[p^{n}\right]\right)$ over $K(s,t)$ has  separable degree $p^{n-1}\left(p-1\right)$, which is exactly the order of $\left(\bbz/p^{n}\bbz\right)^{\times}$, and  the automorphism group $\fkA_{E_{s,t}/\Kst}\left(n\right)$ is a subgroup of $\left(\bbz/p^{n}\bbz\right)^{\times}$. Therefore, $\fkA_{E_{s,t}/\Kst}\left(n\right) \cong \left(\bbz/p^{n}\bbz\right)^{\times}$.
\end{proof}

\

\begin{proof}[Proof of Corollary~\ref{cor:main1}.]
	First, for a variable $u$, we show that the automorphism group of the splitting field of $\overline{\psi_{p^{n},u,u}}$ over $K\left(u\right)$ is isomorphic to the automorphism group $\left(\left(\bbz/p^{n}\bbz\right)^{\times}\right)^{2}$ of the splitting field of $\overline{\psi_{p^{n},s,t}}\left(x\right)$ over $\Kst$. Since  $\overline{\psi_{p^{n},s,t}}\left(x\right)$ is weight-homogeneous of weight $\frac{p^{2n}-1}{2}$ by \eqref{homog}, we have $\overline{\psi_{p^{n},s^{3}/t^{2},s^{3}/t^{2}}}\left(x\right) = \left(\frac{s}{t}\right)^{\frac{p^{2n}-1}{2}} \overline{\psi_{p^{n},s,t}}\left(\frac{t}{s}x\right)$. Hence, the splitting fields of $\overline{\psi_{p^{n},s^{3}/t^{2},s^{3}/t^{2}}}$ and $\overline{\psi_{p^{n},s,t}}$ over $\Kst$ are the same. Moreover, $\Kst = K\left(s^{3}/t^{2},t\right)$ and two fields $K\left(s^{3}/t^{2}\right)$ and $K\left(t\right)$ are linearly disjoint  over $K$, and hence, the automorphism group of the splitting field $\overline{\psi_{p^{n},s^{3}/t^{2},s^{3}/t^{2}}}$ over $K\left(s^{3}/t^{2}\right)$ is isomorphic to$\left(\left(\bbz/p^{n}\bbz\right)^{\times}\right)^{2}$. Replacing $s^{3}/t^{2}$ by $u$, the automorphism group of the splitting field $\overline{\psi_{p^{n},u,u}}$ over $K\left(u\right)$ is isomorphic to $\left(\left(\bbz/p^{n}\bbz\right)^{\times}\right)^{2}$.

	By Hilbert's irreducibility theorem (\cite[Proposition~3.3.5]{Serre08} and \cite[Remark~2.4]{IK22}), there exists infinitely many distinct $u_{0} \in K$ such that the autmomorphism group of the splitting field $K\left(x\left(E_{u_{0},u_{0}}\left[p^{n}\right]\right)\right)$ of the polynomial $\overline{\psi_{p^{n},u_{0},u_{0}}}$ over $K$ is isomorphic to the autmomorphism group $\left(\left(\bbz/p^{n}\bbz\right)^{\times}\right)^{2}$ of the splitting field $K\left(x\left(E_{u,u}\left[p^{n}\right]\right)\right)$ of $\overline{\psi_{p^{n},u,u}}$ over $K\left(u\right)$. 
 
	For a generator $\left(x_{n},y_{n}\right)$ of $E_{u_{0},u_{0}}\left[p^{n}\right]$, if $y_{n} \notin K\left(x_{n}\right)$, then $\fkA_{E_{u_{0},u_{0}}/K}\left(n\right) \cong \left(\bbz/p^{n}\bbz\right)^{\times}$ by a similar argument as in the proof of Proposition~\ref{prop:modified_main}. 
 
	If $y_{n} \in K\left(x_{n}\right)$, we can find a $\delta \in K$ such that $\fkA_{E_{\delta^{2}u_{0},\delta^{3}u_{0}}/K}\left(n\right) \cong \left(\bbz/p^{n}\bbz\right)^{\times}$, by using Hilbert's irreducibility theorem. This theorem guarantees the existence of  infinitely many distinct $\delta \in \left(\bba^{1}\setminus \left\{0\right\}\right)\left(K\right)$ (up to square)  such that $\sqrt{\delta} \notin K$. Hence, for some such $\delta$, we have $\sqrt{\delta} \notin K\left(x_{n}\right)$, meaning that two extension fields $K\left(x_{n}\right)$ and $K\left(\sqrt{\delta}\right)$ are linearly disjoint over $K$. Consequently, the separable degree of the extension $K\left(x_{n}, \sqrt{\delta}\right)$ is equal to the order of $\left(\bbz/p^{n}\bbz\right)^{\times}$. Furthermore, $K\left(x_{n}, \sqrt{\delta}\right) = K\left(E_{\delta^{2}u_{0},\delta^{3}u_{0}}\left[p^{n}\right]\right)$, since $E_{\delta^{2}u_{0},\delta^{3}u_{0}}\left[p^{n}\right]$ is generated by $\left(\delta x_{n},\sqrt{\delta^{3}}y_{n}\right)$. In conclusion, $\fkA_{E_{\delta^{2}u_{0},\delta^{3}u_{0}}}\left(n\right) \cong \left(\bbz/p^{n}\bbz\right)^{\times}$. Since the $j$-invariant of $E_{\delta^{2}u_{0},\delta^{3}u_{0}}$ is $\frac{1728u_{0}}{u_{0}+27/4}$ which is determined uniquely by $u_{0}$, there are infinitely many distinct elliptic curves $E/K$ defined over $K$ such that $\fkA_{E/K}\left(n\right) \cong \left(\bbz/p^{n}\bbz\right)^{\times}$ with distinct $j$-invariants.
\end{proof}


\

\begin{proof}[Proof of Corollary~\ref{cor:main2}.]
	For an elliptic curve  $E/K$, if $E\left(K\right)\left[p\right]\neq 0$, then  $E\left(K\right)\left[p\right] \cong \bbz/p\bbz$, so $K\left(E\left[p\right]\right) = K$. Therefore, $\fkA_{E/K}\left(1\right)$ is trivial.

	By Corollary~\ref{cor:main1}, there exist infinitely many elliptic curves $E/K$ with distinct $j$-invariants such that $\fkA_{E/K}\left(1\right) \cong \left(\bbz/p\bbz\right)^{\times}$, which implies that $E\left(K\right)\left[p\right]=0$ from the above. Since there are only finitely many supersingular $j$-invariants by Lemma~\ref{lem:ssj}, this completes the proof.
\end{proof}


\begin{thebibliography}{9}





	\bibitem{CF} \textit{Ching-Li Chai} and \textit{Gerd Faltings}
	Degeneration of abelian varieties.
	Ergeb. Math. Grenzgeb. (3), 22[Results in Mathematics and Related Areas (3)]
	Springer-Verlag, Berlin, 1990, xii+316 pp.

	\bibitem{Dummit-Foote} \textit{David S. Dummit} and \textit{Richard M. Foote}, Abstract algebra, 3rd ed.,
	John Wiley \& Sons, Inc., Hoboken, NJ, 2004.
	
	
	\bibitem{DS} \textit{Fred Diamond} and \textit{Jerry Shurman},
	A first course in modular forms,
	Springer-Verlag, New York 2005.
	
	\bibitem{FJ86} \textit{Michael D. Fried} and \textit{Moshe Jarden},
	Field arithmetic,
	Ergeb. Math. Grenzgeb. (3), 11 [Results in Mathematics and Related Areas (3)],
	Springer-Verlag, Berlin, 1986. xviii+458 pp.

	\bibitem{GN}\textit{Enrique Gon{\'a}lez-Jim{\'e}nez} and \textit{Filip Najman},
	Growth of torsion groups of elliptic curves upon base change,
	Math. Comput. \textbf{89}(323) (2020) 1457-–1485.
	
	\bibitem{IK22} \textit{Bo-Hae Im} and \textit{Hansol Kim},
	Density of elliptic curves over number fields with prescribed torsion subgroups,
	\url{https://arxiv.org/abs/2209.02889}
	
	
	\bibitem{LR} \textit{Alvaro Lozano-Robledo},
	Galois Representations Attached to Elliptic Curves with Complex Multiplication,
	\url{https://arxiv.org/pdf/1809.02584}

	\bibitem{Mazur} \textit{Barry Mazur},
	Rational isogenies of prime degree,
	Invent. Math. \textbf{44} (1978), 129–162.
	

	\bibitem{Serre72} \textit{Jean-Pierre Serre},
	Propri{\'e}t{\'e}s galoisiennes des points d'ordre fini des courbes elliptiques,
	Invent. Math. \textbf{15} (1972), no. 4, 259-–331.



	\bibitem{Serre97} \textit{Jean-Pierre Serre},
	Lectures on the Mordell-Weil theorem, Translated from the French and edited by Martin Brown from notes by Michel Waldschmidt. With a foreword by Brown and Serre, 3rd ed.,
	Vieweg \& Sohn, Braunschweig, 1997.
	
	\bibitem{Serre08} \textit{Jean-Pierre Serre},
	Topics in Galois theory. Second edition. With notes by Henri Darmon, Research Notes in Mathematics, 1,
	A K Peters, Ltd., Wellesley, MA, 2008.
	
	\bibitem{Silverman} \textit{Joseph H. Silverman},
	The arithmetic of elliptic curves, 2nd ed., Springer, Dordrecht 2009.
	
	\bibitem{Z08} \textit{David Zywina},
	On the possible images of the mod $\ell$ representations associated to elliptic curves over $\bbq$,
	\url{https://arXiv.org/abs/1508.07660}, with MAGMA codes at \url{https://pi .math .cornell .edu /~zywina /papers /PossibleImages /index .html}.

 
	



\end{thebibliography}
\end{document}